\documentclass[12pt]{article}
\usepackage{amsmath}
\usepackage{amssymb}
\usepackage{mathrsfs}
\newcommand{\N}{\mathbb{N}}
\newcommand{\R}{\mathbb{R}}
\newcommand{\C}{\mathbb{C}}

\newcommand{\beq}{\begin{eqnarray}
  }
\newcommand{\eeq}{\end{eqnarray}}
\newcommand{\beqst}{\begin{eqnarray*}
  }
\newcommand{\eeqst}{\end{eqnarray*}}
\newtheorem{theorem}{Theorem}

\newtheorem{lemma}{Lemma}
\newtheorem{corollary}{Corollary}

\title{Uniqueness of simultaneous reconstruction of general space- and time-dependent sources and initial states in fractional diffusion 
equations and systems from single boundary measurements. }
 \author{Jaan Janno
 }
 \date{}

\begin{document}
 
 \maketitle

{\begin{center} {\small Tallinn University of Technology, Department of Cybernetics\\
 Ehitajate tee 5, 19086 Tallinn, Estonia\\
 E-mail: jaan.janno@taltech.ee
 }
 \end{center}

 \begin{abstract}
Inverse problem to determine simultaneously  a general space- and time-dependent source and an initial state in a fractional diffusion equation from an {\it a posteriori} measurement 
of the normal derivative of the state on a portion of a boundary of the space domain is considered.  Uniqueness for this problem is 
proved under the assumption that an order of a fractional derivative involved in the equation is irrational. The uniqueness result is generalized 
to an inverse problem for a coupled system of fractional diffusion equations, where both sources and initial states are unknown and first component of the state is measured on 
the boundary.
 \end{abstract}

{\it MSC 2010}: Primary 35R30; Secondary 35R11

\smallskip
{\it Key Words and Phrases}: inverse problem, fractional diffusion equation, source reconstruction

\section{Introduction}

Fractional diffusion equations (FDE) are widely used to model anomalous diffusion processes in physics, biology, engineering sciences etc. Fractional time 
derivatives contained in these equations bring along additional degrees of freedom that can be adjusted in order better to match  mathematical models to real processes. 
Parameters of the equations can be determined via solution of inverse problems that involve overdetermination (observation) conditions. 

The fractional time derivative involves history of an input function. 
Therefore, inverse problems for FDE have some features that does not occur in the case of classical diffusion equations (CDE) that involve integer time derivatives. 

For instance, the solution of FDE depends on a history of a state before a beginning of the observed process. This may be unknown in practice.
In inverse problems to determine parameters of the equations the unknown history can be removed provided there is a gap between 
a shutdown of an unknown source and a generated source. This can be proved by means of analyticity arguments \cite{Janno2024}. 

Orders of fractional derivatives can be independently recovered even if some
other parameters of models are unknown. 
In the case of time trace observations, independent formulas for the orders follow in the 
process $t\to 0$ \cite{Hul,J2016,Kra} or equivalently in the process $s\to\infty$ \cite{JinKian}, where  $s$ is the argument of the Laplace transform. 

A number of papers deal with inverse problems to reconstruct source functions of the form $g(t)f(x)$, where $t$ is the time and $x$ is a space variable. Problems to determine $f$ from 
the data $u(T,x)$, where $u$ is a state variable and  $T$ is a final time, or  $g$ from the data $\Phi u(t,\cdot)$, where $\Phi$ is a functional acting on functions that depend on $x$, 
are rather classical and methods used in the case of CDE can be adjusted to the case of FDE \cite{Ism,Jin,KJ,Kir,Lop,Saka,Slo}. 
Besides, the time-dependent source factor $g$ in FDE can be uniquely reconstructed from the final data $u(T,x)$.  A proof of this assertion
uses asymptotic power-type expansions of Mittag-Leffler functions involved in a formula of the state \cite{JFCAA}. Uniqueness for the similar problem in the case of CDE is open.  
\iffalse The mentioned asymptotics is used to prove uniqueness 
for direct problems for FDE where boundary conditions are replaced by final condition \cite{j2021}. Uniqueness for such problems for CDE is also open.
In case of final observation, independent equations 
for the orders can be extracted from high frequency modes \cite{JK,Kian1,Kaif}. \fi

A function in an interval $(0,t_0)$ can be uniquely recovered from its values and values of its fractional derivative in the interval $(t_0,t_1)$, where 
$0<t_0<t_1<\infty$ (obviously, this is not valid in the case of integer derivative). 
A proof of  uses the fact that the Laplace transform of the kernel of fractional derivative is not meromorphic \cite{KJ1}. 
This result and similar techniques can be applied to study inverse problems to recover parameters of equations from data given in $(t_0,t_1)\times\Gamma$, where  
$(t_0,t_1)$ is an {\it a posteori} time interval and $\Gamma$ is a portion 
of the boundary of a space domain $\Omega$.

For instance, in \cite{Kian2} simultaneous reconstruction of several parameters in FDE from data given in the set $(t_0,t_1)\times\Gamma$
is shown. The data consist of  ${\partial\over\partial \nu}u$ and ${\partial\over\partial \nu}Au$, where $\nu$ is the normal vector 
and $A$ is an elliptic operator. 

Problems to reconstruct the source term of the form $g(t)f(x)$ in FDE from observations on  $\Gamma$ 
 are studied in \cite{JannoKian}. It is shown that in case $g(t)=0$, $t>t_0$, 
the function $g$ is uniquely recovered by ${\partial\over\partial \nu}u(t,x_0)$, $t\in (t_0,t_1)$, where $x_0$ is a fixed point of $\Gamma$, and $f(x)$, $x\in\Omega$,
 is uniquely determined by  the data
\beq\label{data}
{\partial\over\partial \nu}u(t,x),\;t\in (t_0,t_1),\;x\in\Gamma.
\eeq
 The proof operates with a jump at a branch line of the Laplace transform of an observation function. 
Similar results with a different method are obtained in \cite{yama}. These results are extended to interior measurements, too \cite{Yu}.

In case the order of the fractional derivative in FDE is irrational, then the inverse problem is even more informative. Then the full source function 
$g(t)f(x)$ is uniquely recovered from  the data \eqref{data}
 \cite{JannoKian,Lor}. 

In the present paper we will show that in the case of irrational order of the fractional derivative, even a general time and space-dependent source function $f(t,x)$,
$(t,x)\in (0,t_0)\times\Omega$,  
in FDE such that $f=0$, $t>t_0$, and an initial state can be 
uniquely reconstructed from the data \eqref{data}.
We will also extend this result to a coupled system of FDEs for a pair of states $(u,v)$ 
that contain two unknown sources $f(t,x)$ and $\chi(t,x)$. Both sources vanishing for $t>t_0$ and both initial states are 
uniquely reconstructed by the measurement \eqref{data} of the first component $u$. The measure of $\Gamma$ and the length of the interval $(t_0,t_1)$ 
may be arbitrarily small.  To prove these results, we will use an analytic extension of a jump of of the Laplace transform 
of the observation function to infinite number of branches. Taking a certain limit over these branches, 
we can introduce an additional continuous variable to a basic equation. Operating with this variable, we will reach the desired result. 

To the author's knowledge, this is the first paper where uniqueness for an inverse problem to determine a general function defined in a domain of higher dimension from a 
function given in a domain of lower dimension is proved.

\section{Formulation of problems}

Let $\Omega\subset\R^N$ be a bounded domain with the boundary $\partial\Omega$. In the case $N\ge 2$ we assume that $\partial\Omega$ is sufficiently regular. In the case 
$N=1$ we define $\Omega=(0,\pi)$. 

Let us formulate the following initial-boundary value problem (direct problem) 
for a coupled system of fractional diffusion equations involving a pair of unknowns $(u,v)$:

\beq\label{e1}
&&\hskip-1.3truecm D_t^\alpha (u-\varphi)(t,x)-\kappa\Delta u(t,x)+cu(t,x)+av(t,x)=f(t,x),\; x\in\Omega,\; t>0,
\\[1ex] \label{e2}
&&\hskip-1.3truecm D_t^\alpha (v-\psi)(t,x)-\varkappa\Delta v(t,x)+dv(t,x)+bu(t,x)=\chi(t,x),\; x\in\Omega,\; t>0,
\eeq
\beq\label{e3}
&&u(t,x)=0,\; x\in\partial\Omega,\; t>0,
\\ \label{e4}
&&v(t,x)=0,\; x\in\partial\Omega,\; t>0,
\eeq
\beq \label{e5}
&&u(0,x)=\varphi(x),\; x\in\Omega,
\\ \label{e6}
&&v(0,x)=\psi(x),\;x\in\Omega,
\eeq
where $0<\alpha<1$, $D_t^\alpha$ is the Riemann-Liouville fractional derivative of the order $\alpha$, i.e.
$$
D_t^\alpha q(t)={d\over dt}\int_0^t {(t-\tau)^{-\alpha}\over \Gamma(1-\alpha)}q(\tau)d\tau,
$$
and $\Delta$ is the Laplacian.
The 
parameters $\kappa,\varkappa,a,b,c,d$ are given scalars satisfying the inequalities
\beq\label{kappaineq}
\kappa, \varkappa>0,\;\; c,d\ge 0.
\eeq

We mention that the terms $D_t^\alpha (u-\varphi)(t,x)$ and $D_t^\alpha (v-\psi)(t,x)$  in \eqref{e1} and \eqref{e2} 
are the Caputo fractional derivatives of 
$u$ and $v$, respectively.

\smallskip
If $a=0$ then \eqref{e1}, \eqref{e3} and \eqref{e5} form an independent problem for $u$. 

\medskip
In the inverse problems we will use an additional condition that is given in a portion of boundary $\Gamma\subset\partial\Omega$. 
In the case $N\ge 2$ we assume that $\Gamma$ is open and in the case $N=1$ we define  $\Gamma=\{0\}$. 
The additional condition has the following 
form:
\beq\label{e7}
{\partial\over \partial\nu}u(t,x')=h(t,x'),\; t\in (t_0,t_1),\; x'\in\Gamma,
\eeq
where $\nu$ is the outer normal of $\partial\Omega$, the numbers $t_0,t_1$ are such that $0<t_0<t_1<\infty$ and $h$ is a given function. 
We will be limited to the case when the source(s) vanish after $t=t_0$.
We are going to study two  inverse problems (IP1 and IP2) formulated below.

\medskip
\noindent IP1: Let $a=0$. Find $f(t,x)$,  $\varphi(x)$ and $u(t,x)$ such that  $f(t,\cdot)=0$, $t>t_0$, and \eqref{e1}, \eqref{e3}, \eqref{e5},
 \eqref{e7} are valid.

\medskip\noindent
 IP2: Let $a\ne 0$. Find $f(t,x),\chi(t,x)$, $\varphi(x),\psi(x)$ and $u(t,x)$, $v(t,x)$ such that $f(t,\cdot)=\chi(t,\cdot)=0$, $t>t_0$,
 and \eqref{e1} - \eqref{e6}, \eqref{e7} are valid.

\smallskip Plan of the paper is as follows.
In the next section we will give some basic definitions and relations and in Section \ref{secdirect} we will prove the existence and uniqueness for the direct problem.
Section \ref{secinv} contains uniqueness results for the inverse problems.

\section{Functional spaces}

Let us consider the eigenvalue problem   
$-\Delta {\rm z}(x)=\lambda {\rm z}(x)$, $x\in\Omega$, ${\rm z}(x)=0$, $x\in\partial\Omega$.
We denote the family of its solutions by $(\lambda,{\rm z})=(\lambda_k,{\rm z}_{kl})$, $l=1,\ldots,\mu_k$, $k\in\N$. Here $\mu_k$ 
is the multiplicity of $\lambda_k$. Assume that
the eigenvalues are monotonically ordered, i.e. $0<\lambda_1<\lambda_2<\ldots$, 
and
the system of eigenfunctions ${\rm z}_{kl}$,  $k\in\N$, $l=1,\ldots,\mu_k$, is orthonormed in $L_2(\Omega)$. Then 
 any $w\in L_2(\Omega)$ can be expanded into Fourier series $w=\sum\limits_{k=1}^\infty \sum\limits_{l=1}^{\mu_k}\langle w,{\rm z}_{kl}\rangle_{L_2(\Omega)}{\rm z}_{kl}$
that converges in $L_2(\Omega)$. 

In the sequel we will denote the  Fourier coefficients by subscripts, i.e $w_{kl}=\langle w,{\rm z}_{kl}\rangle_{L_2(\Omega)}$, 
$w\in L_2(\Omega)$. 
This means that the notation of Fourier coefficients of functions involved in the direct problem is as follows:
\beqst
&&u_{kl}(t)=\langle u(t,\cdot),{\rm z}_{kl}\rangle_{L_2(\Omega)},\; v_{kl}(t)=\langle v(t,\cdot),{\rm z}_{kl}\rangle_{L_2(\Omega)},\;
\\
&&f_{kl}(t)=\langle f(t,\cdot),{\rm z}_{kl}\rangle_{L_2(\Omega)},\; \chi_{kl}(t)=\langle \chi(t,\cdot),{\rm z}_{kl}\rangle_{L_2(\Omega)},\;
\\
&&\varphi_{kl}=\langle \varphi,{\rm z}_{kl}\rangle_{L_2(\Omega)},\; \psi_{kl}=\langle \psi,{\rm z}_{kl}\rangle_{L_2(\Omega)}.
\eeqst

Let the domain of $\Delta$ be
$$
H_0^2(\Omega)=\{w\in H^2(\Omega)\, :\, w(x)=0,\, x\in\partial\Omega\}.
$$
In addition, let us introduce the following family of sets:
$$
{\mathcal D}^\rho=\{w\in L_2(\Omega)\, :\, \sum_{k=1}^\infty \sum_{l=1}^{\mu_k}\lambda_k^{2\rho} |w_{kl}|^2<\infty\},\; \rho\ge 0.
$$
These are Hilbert spaces with the scalar products $\langle w,\xi\rangle_{{\mathcal D}^\rho}=\sum\limits_{k=1}^\infty \sum\limits_{l=1}^{\mu_k}
\lambda_k^{2\rho}w_{kl} {\overline \xi}_{kl}$. 

It hold ${\mathcal D}^1=H_0^2(\Omega)$ and  ${\mathcal D}^{\rho_2}\hookrightarrow {\mathcal D}^{\rho_1}$ for $0\le\rho_1\le\rho_2$.

\medskip
Next, let $X$ be a complex Banach space with a norm $\|\cdot\|$ and $S\subseteq\R$. We introduce the following spaces of  functions 
that map $S$ to $X$:
\beqst
&&L_p(S;X)=\left\{w \, :\, \|w\|_{L_p(S;X)}:=\left[\int_S\|w(t)\|^pdt\right]^{1/p}<\infty\right\},\;\; 1\le p<\infty,\\
&&L_\infty(S;X)=\left\{w \, :\, \|w\|_{L_\infty(S;X)}:={\rm ess}\, \sup_{t\in S}\|w(t)\|<\infty\right\},
\\
&&W_p^l(S;X)=\left\{w \, :\, {d^i\over dt^i}w\in {L_p(S;X)},\, i=0,\ldots,l\right\},\; 1\le p\le \infty,\; l\in\N,
\\
&&C(S;X)=\{w\, :\, w - \mbox{continuous in $S$}\}.
\eeqst
If $S$ is compact, then $C(S;X)$ is a Banach space with the norm $\|w\|_{C(S;X)}=\max\limits_{t\in S}\|w(t)\|$. 

Further, let $T\in (0,\infty)$, $1<p<\infty$,  and introduce the following space:
\beqst
&&\hskip-7truemm
_0H_p^\alpha ((0,T);X)=\{w|_{(0,T)}\, :\, w\in H_p^\alpha (\R;X),\, {\rm supp}\,w \subseteq [0,\infty)\},
\eeqst
where $
H_p^\alpha (\R;X)=\{w\in L_p(\R;X)\, :\, {\mathcal F}^{-1}|\xi|^{\alpha}{\mathcal F}w\in L_p(\R;X)\}$
and ${\mathcal F}$ is the Fourier transform with the argument $\xi$.

In case $X=\C$ we drop the value space $\C$ in these notations.

\smallskip
The Riemann-Liouville fractional integral of the order $\beta>0$ is given by the following formula:
$$
I_t^\beta q(t)=\int_0^t {(t-\tau)^{\beta-1}\over\Gamma(\beta)}q(\tau)d\tau,\; t>0.
$$
Let us formulate a lemma that follows from Corollary 2.8.1 in \cite{zachdiss} and provides a relation between $D_t^\alpha$ and $I_t^\alpha$.

\begin{lemma}\label{lemma31} 
Let $X$ be a complex Hilbert space and
  $p\in (1,\infty)$.
The operator  $I_t^{\alpha}$ is a
bijection from $L_p((0,T);X)$ onto $_{0}H_p^\alpha((0,T);X)$, the inverse of $I_t^\alpha$ is the Riemann-Liouville
fractional derivative $D_t^\alpha$
and
$$\|w\|_{_0H_p^\alpha((0,T);X)}=\|{D_t^\alpha } w\|_{L_p((0,T);X)}$$
is a norm in the space $_{0}H_p^\alpha((0,T);X)$.
Moreover,  in case
$p\in ({1\over \alpha},\infty)$ it holds\break\\[-2ex] $_0H_p^\alpha((0,T);X)\hookrightarrow C([0,T];X)$
 and $w(0)=0$ for $w\in {_{0}H_p^\alpha}((0,T);X).$
\end{lemma}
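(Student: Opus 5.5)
The plan is to reduce Lemma \ref{lemma31} to the scalar-valued theory via the Fourier transform on the whole line and the identification of $I_t^\alpha$ with a Fourier multiplier. First, extend $w\in L_p((0,T);X)$ by zero to $\R$. The kernel $k_\alpha(t)=t_+^{\alpha-1}/\Gamma(\alpha)$ has Fourier transform $(i\xi)^{-\alpha}$ (with the principal branch, $\xi\ne 0$). Convolution with $k_\alpha$ is therefore formally the multiplier $(i\xi)^{-\alpha}$, and $D_t^\alpha$ is the multiplier $(i\xi)^{\alpha}$.

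To avoid the singularity at $\xi=0$, I will work on a finite interval with the damped kernel $e^{-t}k_\alpha$, whose symbol is $(1+i\xi)^{-\alpha}$. Since the restriction to $(0,T)$ only involves values on $(0,T)$, the undamped and damped versions are related by a bounded perturbation: $(1+i\xi)^\alpha$ versus $(i\xi)^\alpha$ differ by a symbol of order $\xi^{\alpha-1}$ at infinity, which is harmless on bounded intervals.

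The key step is to show that $(1+i\xi)^{\alpha}(1+|\xi|^\alpha)^{-1}$ and its inverse are $L_p(\R;X)$ Fourier multipliers. Since $X$ is a Hilbert space, hence UMD, the Mikhlin multiplier theorem applies to these scalar symbols, because $\xi m'(\xi)$ is bounded. This gives the norm equivalence between $\|{\mathcal F}^{-1}(1+i\xi)^\alpha{\mathcal F}w\|_{L_p}$ and the $H_p^\alpha$-norm. The causal structure is handled by Paley--Wiener: $(1+i\xi)^{\pm\alpha}$ is holomorphic in the appropriate half-plane, so these operators preserve support in $[0,\infty)$. Consequently $I_t^\alpha$ maps $L_p$ (zero-extended) into $_0H_p^\alpha$ and is bijective, with inverse $D_t^\alpha$, by computing symbols. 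The identity $\|w\|=\|D_t^\alpha w\|_{L_p}$ being a norm then follows from injectivity of $D_t^\alpha$ and boundedness of $I_t^\alpha$ on $L_p((0,T);X)$, using Young's inequality since $k_\alpha\in L_1(0,T)$.

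For the embedding when $p>1/\alpha$, I will write $w=I_t^\alpha g$ with $g\in L_p$ and apply H\"older's inequality with $(t-\tau)^{\alpha-1}\in L_{p'}(0,T)$, which holds exactly when $(\alpha-1)p'>-1$, i.e.\ $p>1/\alpha$. This gives
$$\|w(t)\|\le C t^{\alpha-1/p}\|g\|_{L_p}$$
and hence $w(0)=0$. Continuity follows from the same estimate applied to differences, via continuity of translation in $L_{p'}$.

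The main obstacle is the multiplier step, specifically the careful justification that the truncation to $(0,T)$ is compatible with the whole-line $H_p^\alpha$ definition. In practice I would simply cite Zacher's Corollary 2.8.1, as the paper does.
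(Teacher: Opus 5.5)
The paper gives no argument of its own for this lemma. It only states that the lemma follows from Corollary 2.8.1 of Zacher's dissertation, so your fallback of citing that corollary is exactly what the paper does.

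Your sketch is nevertheless a genuinely different, self-contained route: whole-line Fourier multipliers in the UMD space $X$ plus causality of the kernels. Two parts of it are sound and worth keeping.
\begin{itemize}
\item The embedding part is complete and elementary. Once $w=I_t^\alpha g$ with $g=D_t^\alpha w$, H\"older's inequality with $(t-\tau)^{\alpha-1}\in L_{p'}(0,T)$ (exactly the condition $p>1/\alpha$) gives $\|w(t)\|\le Ct^{\alpha-1/p}\|D_t^\alpha w\|_{L_p((0,T);X)}$. Continuity of translations in $L_{p'}$ then gives uniform continuity. So the embedding into $C([0,T];X)$ with $w(0)=0$ needs no Fourier analysis at all.
\item For the damped operator $J_\alpha g=(e^{-t}k_\alpha)*g$, Mikhlin's theorem together with the support of the kernels in $[0,\infty)$ does show that $g\mapsto (J_\alpha g)|_{(0,T)}$ is a bijection of $L_p((0,T);X)$ onto ${}_0H_p^\alpha((0,T);X)$.
\end{itemize}

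The step that does not hold as written is the passage from $J_\alpha$ to $I_t^\alpha$. That $(1+i\xi)^\alpha-(i\xi)^\alpha$ is a bounded multiplier only shows that $D_t^\alpha$ is bounded from ${}_0H_p^\alpha$ into $L_p$. A bounded perturbation of an isomorphism need not be an isomorphism. Here the inverse symbol $(i\xi)^{-\alpha}$ is singular at $\xi=0$, so $I_t^\alpha$ is not even bounded on $L_p(\R;X)$: for $g$ supported in $[0,T]$, $I_t^\alpha g(t)$ in general decays only like $t^{\alpha-1}$, which is not in $L_p(T,\infty)$ when $(1-\alpha)p\le 1$. Only the boundedness of the interval rescues invertibility, and you have to use it explicitly.

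One exact way to do this is as follows.
\begin{itemize}
\item Compare Laplace transforms for $s>1$ via $s^{-\alpha}=(1+s)^{-\alpha}(1+s^{-1})^{\alpha}$ and use causality. This gives $I_t^\alpha=J_\alpha B$ on $(0,T)$ with $B=\sum_{j\ge 0}\binom{\alpha}{j}I_t^j$.
\item Since $\|I_t^j\|_{{\mathcal B}(L_p((0,T);X))}\le T^j/j!$ and $|\binom{\pm\alpha}{j}|\le 1$, both $B$ and $\sum_{j\ge 0}\binom{-\alpha}{j}I_t^j$ converge absolutely in operator norm and are mutually inverse.
\item Hence $B$ is an automorphism of $L_p((0,T);X)$, and bijectivity transfers from $J_\alpha$ to $I_t^\alpha$.
\item That the inverse is $D_t^\alpha$ is then just $\frac{d}{dt}I_t^{1-\alpha}I_t^\alpha g=\frac{d}{dt}I_t^{1}g=g$, and the norm statement follows.
\end{itemize}
The shortcut $I_t^\alpha g=e^{t}J_\alpha(e^{-t}g)$ also works, but then you must additionally show that multiplication by $e^{\pm t}$ preserves ${}_0H_p^\alpha$. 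With this patch your sketch becomes an independent proof of the lemma.
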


\section{Direct problem}\label{secdirect}

Let us start by introducing some additional notation.

\smallskip
Throughout the paper,  $z^\beta$, $z\in\C$, $\beta>0$, denotes the principal value of $\beta$-th power of  $z$, i.e. 
$z^\beta=|z|^\beta e^{i\beta {\rm Arg} z}$, ${\rm Arg}z\in (-\pi,\pi]$. 

\smallskip
Let $X$ be a Banach space and $q\in L_1((0,T);X)$ for any $T>0$. Let there exist $\sigma\in\R$ such that $e^{-\sigma t}q\in L_1((0,\infty);X)$. Then it is well-known that the Laplace transform 
 $$
({\mathcal L}q)(s)=\int_0^\infty e^{-st}q(t)dt
 $$
exists and is analytic as a function with values in $X$ in the half-plane ${\rm Re}s>\sigma$. 
We will also use capitals to denote the Laplace transforms. For instance,
\beqst
&&U_{kl}(s)=({\mathcal L}u_{kl})(s),\; V_{kl}(s)=({\mathcal L}v_{kl})(s),\;\\
&&F_{kl}(s)=({\mathcal L}f_{kl})(s),\; {\mathcal X}_{kl}(s)=({\mathcal L}\chi_{kl})(s).
\eeqst

Next let us introduce the following spaces for the components $u$ and $v$ of the direct problem \eqref{e1} - \eqref{e6}:
\beqst
&&\hskip-1truecm \mathcal{W}_{\alpha,p,T}=\left\{w\in C([0,T];L_2(\Omega))\cap L_p((0,T);H_0^2(\Omega))\, :\,\right.
\\
&&\qquad \left. w-w(0,\cdot)\in {_0H_p^{\alpha}}((0,T);L_2(\Omega))\right\},
\;\; p>1,\, T>0,
\\
&&\hskip-1truecm\mathcal{W}_{\alpha,p}=\{w\, :\, [0,\infty)\to L_2(\Omega)\, :\, w|_{[0,T]\times\Omega}\in \mathcal{W}_{\alpha,p,T}\;\; \forall T>0\},\; p>1.
\eeqst
We mention that if a solution of \eqref{e1} - \eqref{e6} belongs to $(\mathcal{W}_{\alpha,p})^2$ for some $p\in (1,\infty)$, 
then it is a strong solution, i.e. all terms in \eqref{e1} - \eqref{e6} are regular distributions.

\begin{lemma}\label{dplem1}
Let \eqref{e1} - \eqref{e6} have a solution $(u,v)\in (\mathcal{W}_{\alpha,p})^2$ for some $p\in (1,\infty)$. Moreover,
let there exist $\sigma\in\R$ such that $e^{-\sigma t}u, e^{-\sigma t}v\in L_1((0,\infty);L_2(\Omega))$. Then 
$(u_{kl},v_{kl})\in (C[0,\infty))^2$, $e^{-\sigma t}u_{kl},e^{-\sigma t}v_{kl}\in L_1(0,\infty)$, $k\in\N$, $l=1,\ldots,\mu_k$. 
Moreover, 
\beq\label{prop1}
&&\hskip-12truemm U_{kl}(s)={(s^\alpha\!+\!\varkappa\lambda_k\!+\!d)(F_{kl}(s)+s^{\alpha-1}\varphi_k)-
a({\mathcal X}_{kl}(s)+s^{\alpha-1}\psi_{kl})\over (s^\alpha+\kappa\lambda_k+c)(s^\alpha+\varkappa
\lambda_k+d)-ab},\, 
\\[1ex]  \label{prop2} 
&&\hskip-12truemm V_{kl}(s)={(s^\alpha\!+\!\kappa\lambda_k\!+\!c)({\mathcal X}_{kl}(s)+s^{\alpha-1}\psi_{kl})-b(F_{kl}(s)+s^{\alpha-1}\varphi_{kl})\over (s^\alpha+\kappa\lambda_k+c)(s^\alpha+\varkappa\lambda_k+d)-ab},\;
\eeq 
for ${\rm Re}s>\sigma$, $k\in\N$, $l=1,\ldots,\mu_k$. 
\end{lemma}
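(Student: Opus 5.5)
The plan is to project the system onto the eigenfunctions ${\rm z}_{kl}$, which reduces it to a $2\times2$ system of scalar fractional ODEs for $(u_{kl},v_{kl})$, and then to apply the Laplace transform and solve the resulting linear algebraic system by Cramer's rule.

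\textbf{Regularity of the coefficients.} Since $u,v\in C([0,T];L_2(\Omega))$ for every $T$, the coefficients $u_{kl}(t)=\langle u(t,\cdot),{\rm z}_{kl}\rangle$ are continuous on $[0,\infty)$. The bound $|u_{kl}(t)|\le\|u(t,\cdot)\|_{L_2(\Omega)}$ gives $e^{-\sigma t}u_{kl}\in L_1(0,\infty)$; the same holds for $v_{kl}$. Hence $U_{kl},V_{kl}$ exist and are analytic for ${\rm Re}\,s>\sigma$.

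\textbf{Projected equations.} Take the inner product of \eqref{e1} with ${\rm z}_{kl}$. Because $u(t,\cdot)\in H_0^2(\Omega)$ for a.e. $t$ and ${\rm z}_{kl}$ vanishes on $\partial\Omega$, Green's formula gives $\langle -\Delta u,{\rm z}_{kl}\rangle=\lambda_k u_{kl}$. The operator $D_t^\alpha$ commutes with the bounded functional $\langle\cdot,{\rm z}_{kl}\rangle$. To see this, note that $u-\varphi\in {_0H_p^\alpha}((0,T);L_2(\Omega))$, so by Lemma~\ref{lemma31} we have $u-\varphi=I_t^\alpha w$ with $w=D_t^\alpha(u-\varphi)\in L_p((0,T);L_2(\Omega))$. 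Then $u_{kl}-\varphi_{kl}=I_t^\alpha w_{kl}$, and therefore $D_t^\alpha(u_{kl}-\varphi_{kl})=w_{kl}$. This yields
$$D_t^\alpha(u_{kl}-\varphi_{kl})+(\kappa\lambda_k+c)u_{kl}+av_{kl}=f_{kl},$$
and analogously
$$D_t^\alpha(v_{kl}-\psi_{kl})+(\varkappa\lambda_k+d)v_{kl}+bu_{kl}=\chi_{kl}.$$

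\textbf{Laplace transform.} From the representation $u_{kl}-\varphi_{kl}=I_t^\alpha w_{kl}$ and $\mathcal L I_t^\alpha q=s^{-\alpha}\mathcal Lq$, we get $\mathcal L D_t^\alpha(u_{kl}-\varphi_{kl})=s^\alpha\bigl(U_{kl}-\varphi_{kl}/s\bigr)=s^\alpha U_{kl}-s^{\alpha-1}\varphi_{kl}$. Here some care is needed: $w_{kl}$ is known to lie only in $L_p$ locally, so we must justify that its Laplace transform exists. The cleanest route is to read $w_{kl}$ off the equation: $w_{kl}=f_{kl}-(\kappa\lambda_k+c)u_{kl}-av_{kl}$. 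This requires $e^{-\sigma t}f_{kl}$ (or $e^{-\sigma' t}f_{kl}$) to be integrable, which is implicitly assumed since $F_{kl}$ appears in the statement. Then $w_{kl}$ has a Laplace transform for ${\rm Re}\,s>\sigma$, and the convolution theorem applies to $I_t^\alpha w_{kl}$. Transforming both equations gives the linear system
$$(s^\alpha+\kappa\lambda_k+c)U_{kl}+aV_{kl}=F_{kl}+s^{\alpha-1}\varphi_{kl},\qquad bU_{kl}+(s^\alpha+\varkappa\lambda_k+d)V_{kl}={\mathcal X}_{kl}+s^{\alpha-1}\psi_{kl}.$$

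\textbf{Solving the system.} Cramer's rule gives \eqref{prop1} and \eqref{prop2} wherever the determinant $D(s)=(s^\alpha+\kappa\lambda_k+c)(s^\alpha+\varkappa\lambda_k+d)-ab$ is nonzero. As an analytic function of $s$ on ${\rm Re}\,s>\sigma$, $D$ is not identically zero, since it behaves like $s^{2\alpha}$ as $s\to\infty$. So the identities hold off a discrete set. At the zeros of $D$, the left-hand sides $U_{kl}$, $V_{kl}$ are analytic, so the identities hold there too in the sense of removable singularities, equivalently $D\,U_{kl}$ equals the numerator.

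\textbf{Main obstacle.} The main difficulty is the justification of $\mathcal L D_t^\alpha(u_{kl}-\varphi_{kl})=s^\alpha U_{kl}-s^{\alpha-1}\varphi_{kl}$ on the whole half-plane ${\rm Re}\,s>\sigma$, given that only local $L_p$ information on $D_t^\alpha$ is available. The way around it is the one described above: express $w_{kl}$ through the equation itself, so that its exponential integrability follows from that of $u_{kl}$, $v_{kl}$ and $f_{kl}$.
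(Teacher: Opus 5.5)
Your proposal is correct and takes essentially the paper's route. The paper first uses Lemma~\ref{lemma31} to rewrite \eqref{e1}, \eqref{e2} in $L_2(\Omega)$ as $u-\varphi=I_t^\alpha[\kappa\Delta u-cu-av+f]$ (and similarly for $v$), then projects onto ${\rm z}_{kl}$ by moving $I_t^\alpha$ under the scalar product, Laplace-transforms the scalar integral equations and solves the $2\times 2$ system; you project first and invert $D_t^\alpha$ coefficientwise, which is the same argument in a different order. Your extra care points, namely that $F_{kl}$ and ${\mathcal X}_{kl}$ are tacitly assumed to exist and the treatment of possible zeros of the determinant, are passed over silently in the paper.
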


\noindent{\it Proof}. Since $(u,v)\in (\mathcal{W}_{\alpha,p})^2$, from the relations
 $u_{kl}(t)=\langle u(t,\cdot),{\rm z}_{kl}\rangle_{L_2(\Omega)}$ and $v_{kl}(t)=\langle u(t,\cdot),{\rm z}_{kl}\rangle_{L_2(\Omega)}$ we obtain 
$(u_k,v_k)\in (C[0,\infty))^2$, $T>0$. 
 Next, by Lemma \ref{lemma31} from  \eqref{e1}, \eqref{e2} we deduce 
\beq\label{pro1}
&&\hskip-1.3truecm u(t,x)\!-\!\varphi(x)=I_t^\alpha \left[\kappa\Delta u(t,x)\!-\!cu(t,x)\!-\!av(t,x)\!+\!f(t,x)\right],\, x\in\Omega,\, t>0,
\\ \label{pro2}
&&\hskip-1.3truecm v(t,x)\!-\!\psi(x)=I_t^\alpha \left[\varkappa\Delta v(t,x)\!-\!dv(t,x)\!-\!bu(t,x)\!+\!\chi(t,x)\right],\, x\in\Omega,\, t>0.
\eeq
All terms in these equations restricted to $(0,T)\times \Omega$, $T>0$, belong to $L_p((0,T);L_2(\Omega))$. 
Taking the scalar product of the equations \eqref{pro1} and \eqref{pro2} with ${\rm z}_{kl}$ and moving the integral operator $I_t^\alpha$ under this product we deduce the 
following equations:
\beq\label{pro3}
&&u_{kl}(t)-\varphi_{kl}=I_t^\alpha\left[-(\kappa\lambda_k+c)u_{kl}(t)-av_{kl}(t)+f_{kl}(t)\right], \, t>0,
\\ \label{pro4}
&&v_{kl}(t)-\psi_{kl}=I_t^\alpha\left[-(\varkappa\lambda_k+d)v_{kl}(t)-bu_{kl}(t)+\chi_{kl}(t)\right], \, t>0,
\eeq
for $k\in\N$, $l=1,\ldots,\mu_k$.
Further, the assumption $e^{-\sigma t}u, e^{-\sigma t}v\in\break L_1((0,\infty);L_2(\Omega))$ implies
\beqst
&&\hskip-5truemm e^{-\sigma t}u_{kl}=\langle e^{-\sigma t}u,{\rm z}_{kl}\rangle_{L_2(\Omega)}\in L_1(0,\infty),
e^{-\sigma t}v_{kl}=\langle e^{-\sigma t}v,{\rm z}_{kl}\rangle_{L_2(\Omega)}\in L_1(0,\infty).
\eeqst
Therefore, we can apply Laplace transform to \eqref{pro3}, \eqref{pro4}. We obtain
\beq\label{pro5}
&&U_{kl}(s)-{\varphi_{kl}\over s}={1\over s^\alpha}\left[-(\kappa\lambda_k +c)U_{kl}(s)-aV_{kl}(s)+F_{kl}(s)\right],\; 
\\ \label{pro6}
&&V_{kl}(s)-{\psi_{kl}\over s}={1\over s^\alpha}\left[-(\varkappa\lambda_k +d)V_{kl}(s)-bU_{kl}(s)+{\mathcal X}_{kl}(s)\right],
\eeq
for $\; {\rm Re} s>\sigma$, $k\in\N$, $l=1,\ldots,\mu_k$.
Solving this system for $U_{kl}(s)$ and $V_{kl}(s)$ we reach \eqref{prop1}, \eqref{prop2}. \hfill $\Box$

\medskip
Let us factorize the denominator in \eqref{prop1}, \eqref{prop2} as follows:
\beq\label{factor}
&&(s^\alpha+\kappa\lambda_k+c)(s^\alpha+\varkappa
\lambda_k+d)-ab=(s^\alpha+\breve\lambda_k)(s^\alpha+\hat\lambda_k),
\\[1ex]
\label{lambdad}
&&\left.\begin{split}
&\breve\lambda_k={1\over 2}\left[(\kappa+\varkappa)\lambda_k+c+d+\sqrt{\left((\kappa-\varkappa)\lambda_k+c-d\right)^2+4ab}\right],
\\
&\hat\lambda_k={1\over 2}\left[(\kappa+\varkappa)\lambda_k+c+d-\sqrt{\left((\kappa-\varkappa)\lambda_k+c-d\right)^2+4ab}\right].
\end{split}\right\}
\eeq
\begin{lemma}\label{lambdaklemma} Assume that the following inequalities are valid:
\beq \label{discr}
&&\hskip-5truemm\left((\kappa-\varkappa)\lambda_k+c-d\right)^2+4ab\ge 0,\; k\in\N,
\\[1ex]
\label{paramineq}
&&ab\le\min\{c^2;d^2\}.
\eeq
Then there exist  $c_1,c_2>0$ such that 
\beq\label{lamineq}
c_1\lambda_k\le \hat\lambda_k\le \breve\lambda_k \le c_2\lambda_k,\;\;   k\in\N.
\eeq
\end{lemma}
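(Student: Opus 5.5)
The plan is to work directly with the explicit formulas \eqref{lambdad} and the Vieta relations for the factorization \eqref{factor}. Write $A_k=(\kappa-\varkappa)\lambda_k+c-d$ and $\Sigma_k=(\kappa+\varkappa)\lambda_k+c+d$. By \eqref{discr} the square root $\sqrt{A_k^2+4ab}$ is a nonnegative real number. Hence $\hat\lambda_k,\breve\lambda_k$ are real and $\hat\lambda_k\le\breve\lambda_k$ is immediate.

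For the upper bound I will estimate the square root. If $ab\ge 0$, then $\sqrt{A_k^2+4ab}\le |A_k|+2\sqrt{ab}$. If $ab<0$, then $\sqrt{A_k^2+4ab}\le |A_k|$. In both cases, using $|A_k|\le|\kappa-\varkappa|\lambda_k+|c-d|$,
$$
\breve\lambda_k\le \tfrac12\left[(\kappa+\varkappa+|\kappa-\varkappa|)\lambda_k+c+d+|c-d|+2\sqrt{|ab|}\right]=\max\{\kappa,\varkappa\}\lambda_k+\max\{c,d\}+\sqrt{|ab|}.
$$
Since the eigenvalues increase, $\lambda_k\ge\lambda_1>0$, so the constant term is bounded by $C\lambda_k$ with $C=(\max\{c,d\}+\sqrt{|ab|})/\lambda_1$. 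This gives $\breve\lambda_k\le c_2\lambda_k$ with $c_2=\max\{\kappa,\varkappa\}+C$.

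For the lower bound on $\hat\lambda_k$ I will avoid the cancellation in the difference $\Sigma_k-\sqrt{\cdot}$ and use the product of the roots instead. By \eqref{factor}, $\hat\lambda_k\breve\lambda_k=(\kappa\lambda_k+c)(\varkappa\lambda_k+d)-ab$. Also $\breve\lambda_k\ge\frac12\Sigma_k>0$ by \eqref{kappaineq}. Therefore
$$
\hat\lambda_k=\frac{\kappa\varkappa\lambda_k^2+(\kappa d+\varkappa c)\lambda_k+cd-ab}{\breve\lambda_k}.
$$
The key use of \eqref{paramineq} comes here. Since $c,d\ge0$, we have $\min\{c^2,d^2\}\le cd$, so $cd-ab\ge0$. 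The remaining terms in the numerator are nonnegative by \eqref{kappaineq}, so the numerator is at least $\kappa\varkappa\lambda_k^2$. Combined with the upper bound $\breve\lambda_k\le c_2\lambda_k$, this yields $\hat\lambda_k\ge (\kappa\varkappa/c_2)\lambda_k$. We may therefore take $c_1=\kappa\varkappa/c_2>0$.

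The only delicate point is this lower bound: a direct estimate of $\Sigma_k-\sqrt{A_k^2+4ab}$ loses control when $ab$ is large. Switching to the product of the roots, and using $ab\le cd$, removes that difficulty.
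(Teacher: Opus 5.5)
Your proof is correct. Your upper bound is essentially the paper's: bound the square root, then absorb the constants using $\lambda_k\ge\lambda_1$.

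The lower bound takes a different route. The paper estimates the difference directly. It uses $\sqrt{A_k^2+4ab}\le |A_k|+2\sqrt{\max\{0;ab\}}$ and splits on the sign of $A_k$. This gives $\hat\lambda_k\ge\kappa\lambda_k+c-\sqrt{\max\{0;ab\}}$ when $A_k\le 0$, and $\hat\lambda_k\ge\varkappa\lambda_k+d-\sqrt{\max\{0;ab\}}$ when $A_k>0$. Then $ab\le c^2$, respectively $ab\le d^2$, yields $\hat\lambda_k\ge\min\{\kappa,\varkappa\}\lambda_k$.

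So your closing remark that the direct estimate ``loses control'' is not accurate under the lemma's hypothesis: \eqref{paramineq} is exactly what keeps $ab$ small enough for it to work. It also produces the clean constant $c_1=\min\{\kappa,\varkappa\}$, independent of $\lambda_1,c,d,ab$. Your $c_1=\kappa\varkappa/c_2$ is never larger and depends on all of these.

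What your product-of-roots argument buys is robustness. It needs no case split and uses only $ab\le cd$, which \eqref{paramineq} implies and which is strictly weaker when $0<c\ne d$. In that wider regime the paper's estimate only gives $\min\{\kappa,\varkappa\}\lambda_k-(\sqrt{ab}-\min\{c,d\})$, which need not be positive for small $k$. Your argument still proves the lemma there.

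Either constant suffices for the paper, since later uses, such as the lower bound on $|s^\alpha+\hat\lambda_k|$, only need some $c_1>0$.
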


\noindent{\it Proof}.  Firstly, we prove the left inequality in \eqref{lamineq}. 
From \eqref{lambdad} we have
\beqst
&&\hat\lambda_k\ge {1\over 2}\left[(\kappa+\varkappa)\lambda_k+c+d-\left(\left|(\kappa-\varkappa)\lambda_k+c-d\right|+2\sqrt{\max\{0;ab\}}\right)\right]
\\
&&=\left\{\begin{array}{ll}
\kappa\lambda_k+c-\sqrt{\max\{0;ab\}}\ge \kappa\lambda_k &\mbox{if $(\kappa-\varkappa)\lambda_k+c-d\le 0$},
\\
\varkappa\lambda_k+d-\sqrt{\max\{0;ab\}}\ge \varkappa\lambda_k &\mbox{if $(\kappa-\varkappa)\lambda_k+c-d>0$}.\end{array}\right.
\eeqst
Therefore, $c_1\lambda_k\le \hat\lambda_k$ with $c_1=\min\{\kappa;\varkappa\}$. Evidently, $\hat\lambda_k\le \breve\lambda_k$. Further, using the inequality $1\le {\lambda_k\over\lambda_1}$, $k\in\N$, 
we deduce 
$\breve\lambda_k\le c_2\lambda_k$ with
$c_2= {1\over 2}\left[(\kappa+\varkappa)+{c+d\over\lambda_1}+\sqrt{\left(|\kappa-\varkappa|+{|c-d|\over\lambda_1}\right)^2+{4|ab|\over \lambda_1^2}}\right].
$
\hfill $\Box$

\medskip
Let us define the following sectors on the complex plane:
$$
\Sigma(\eta,\theta)=\{z\in\C\setminus\{\eta\}\, :\,  |{\rm Arg}\, (z-\eta)|< \theta\},\; \eta\in \R,\; \theta\in (0,\pi].
$$

\begin{theorem}\label{dpthm1}
\begin{description}
\item{\rm (i)} Let \eqref{e1} - \eqref{e6} have a solution $(u,v)\in (\mathcal{W}_{\alpha,p})^2$ for some $p\in (1,\infty)$ and 
there exist $\sigma\in\R$ such that $e^{-\sigma t}u,e^{-\sigma t}v \in L_1((0,\infty);L_2(\Omega))$.
If $f=\chi=0$, $\varphi=\psi=0$ then $u=v=0$. 
\item{\rm (ii)} Let  \eqref{discr}, \eqref{paramineq} hold, $p\in ({1\over\alpha},\infty)$, 
\beq\label{phiass}
&&\varphi,\psi\in {\mathcal D}^{r}\;\;\mbox{for some $r>1-{1\over p\alpha}$},
\eeq
$f(t,\cdot)=\chi(t,\cdot)=0$, $t>t_0$, 
for some $t_0>0$ and 
\beq
 \label{fass2}&&\hskip-1truecm \left.\begin{split}
&f|_{(0,t_0)\times\Omega},\chi|_{(0,t_0)\times\Omega}\in L_\infty((0,t_0); {\mathcal D}^{r_1}),
\\
&\sum_{k=1}^\infty\sum_{l=1}^{\mu_k} \lambda_k^{2r_1}\|f_{kl}\|_{L_\infty(0,t_0)}^2<\infty,\; \sum_{k=1}^\infty\sum_{l=1}^{\mu_k}  \lambda_k^{2r_1}\|\chi_{kl}\|_{L_\infty(0,t_0)}^2<\infty
\\
&\mbox{for some  $r_1>0$}.
\end{split}\right\}
 \eeq
Then \eqref{e1} - \eqref{e6} has a solution $(u,v)\in (\mathcal{W}_{\alpha,p})^2$. Its Fourier coefficients satisfy the estimate
\beq\label{thm1k}
\begin{split}
&|u_{kl}(t)|\!+\!|v_{kl}(t)|\!
\le\! c_0\big\{|\varphi_{kl}|\!+\!|\psi_{kl}|\!+t^{\alpha-1}\!*\!(|f_{kl}|\!+\!|\chi_{kl}|)(t)\!\big\},\, 
\\
&t>0, k\in\N,\, 
l=1,\ldots,\mu_k,
\end{split}
\eeq
where $c_0$ is a constant independent of $t$ and $k$. 
 Moreover,
\beq\label{uexp}
&&\hskip-1.4truecm e^{-\sigma t}u,
e^{-\sigma t}v\in L_1((0,\infty);H_0^2(\Omega)),\; \sigma>0,
\eeq
and $(u(t,\cdot),v(t,\cdot))$, $t>t_0$, can be extended to $(u(z,\cdot),v(z,\cdot))$, $z\in \Sigma (t_0,{(2-\alpha)\pi\over 2\alpha})$, 
that is analytic as a function with values in $(H_0^2(\Omega))^2$.
\end{description}
\end{theorem}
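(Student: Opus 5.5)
For part (i), I will use Lemma \ref{dplem1} directly. With $f=\chi=0$ and $\varphi=\psi=0$, formulas \eqref{prop1}, \eqref{prop2} give $U_{kl}(s)=V_{kl}(s)=0$ for ${\rm Re}\,s>\sigma$. The denominator is a nonzero analytic function of $s$, so it vanishes at most on a discrete set; outside it the quotient is zero, and by continuity $U_{kl}\equiv V_{kl}\equiv0$ on the whole half-plane. By uniqueness of the Laplace transform and continuity of $u_{kl},v_{kl}$, we get $u_{kl}=v_{kl}=0$ for every $k,l$. Completeness of $\{{\rm z}_{kl}\}$ in $L_2(\Omega)$ then gives $u=v=0$.

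For part (ii), I will build the solution mode by mode. Using the factorization \eqref{factor}, I decompose \eqref{prop1}, \eqref{prop2} by partial fractions in $s^\alpha$. When $\breve\lambda_k\ne\hat\lambda_k$, this gives a representation of the form
\[
\frac{s^\alpha+\varkappa\lambda_k+d}{(s^\alpha+\breve\lambda_k)(s^\alpha+\hat\lambda_k)}
=\frac{A_k}{s^\alpha+\breve\lambda_k}+\frac{B_k}{s^\alpha+\hat\lambda_k}.
\]
When the two roots coincide, I use the limit form with derivatives of Mittag-Leffler functions. Inverting term by term expresses $u_{kl}$ and $v_{kl}$ through $E_{\alpha}(-\mu t^\alpha)$ acting on $\varphi_{kl},\psi_{kl}$ and through $t^{\alpha-1}E_{\alpha,\alpha}(-\mu t^\alpha)*f_{kl}$ (and the analogue for $\chi_{kl}$), with $\mu\in\{\breve\lambda_k,\hat\lambda_k\}$. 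By Lemma \ref{lambdaklemma}, $\mu\ge c_1\lambda_k>0$. Complete monotonicity then gives $0\le E_\alpha(-\mu t^\alpha)\le 1$ and $0\le t^{\alpha-1}E_{\alpha,\alpha}(-\mu t^\alpha)\le t^{\alpha-1}/\Gamma(\alpha)$. The partial-fraction coefficients combine to bounded quantities uniformly in $k$: they are ratios such as $(\varkappa\lambda_k+d-\hat\lambda_k)/(\breve\lambda_k-\hat\lambda_k)$ and $a/(\breve\lambda_k-\hat\lambda_k)$. I will check their boundedness using \eqref{discr}, \eqref{paramineq} and \eqref{lamineq}. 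Together these give \eqref{thm1k}. The main obstacle I expect here is uniformity when $\breve\lambda_k-\hat\lambda_k$ is small; I would avoid dividing by it by writing the kernel as a divided difference $\big(g(\hat\lambda_k)-g(\breve\lambda_k)\big)/(\breve\lambda_k-\hat\lambda_k)$ and bounding it by the derivative of $g(\mu)=E_\alpha(-\mu t^\alpha)$ with respect to $\mu$ on $[\hat\lambda_k,\breve\lambda_k]$.

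Next I will show $(u,v)\in(\mathcal W_{\alpha,p})^2$. To control $\Delta u$ in $L_p((0,T);L_2)$, I need $\lambda_k u_{kl}\in L_p$ with square-summability. For the initial part, I use the bound $\mu E_\alpha(-\mu t^\alpha)\le C\mu^{1-\theta}t^{-\alpha\theta}$. I take $\theta<1/(p\alpha)$ so that $t^{-\alpha\theta}\in L_p$, and choose $\theta$ with $1-\theta\le r$; this is possible exactly because $r>1-\frac1{p\alpha}$ in \eqref{phiass}. For the source part, I use $\mu t^{\alpha-1}E_{\alpha,\alpha}(-\mu t^\alpha)\le C\mu^{1-\theta}t^{\alpha\theta-1}$ with $\theta\in(0,1)$ chosen so that $\lambda_k^{1-\theta}\le\lambda_k^{r_1}\lambda_k^{1-\theta-r_1}$ is summable against \eqref{fass2}. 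I then sum in $k,l$ with Minkowski's inequality. The $_0H_p^\alpha$ property of $u-\varphi$ follows from Lemma \ref{lemma31}, since $u-\varphi=I_t^\alpha[\cdots]$ with bracket in $L_p$. Continuity into $L_2$ comes from $p>1/\alpha$ and dominated convergence, using \eqref{thm1k}. For \eqref{uexp}, boundedness of the modes in $t$ together with the $H_0^2$ estimates gives integrability against $e^{-\sigma t}$ for every $\sigma>0$.

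Finally, for analyticity, for $t>t_0$ I write
\[
u_{kl}(t)=\sum_\mu c_\mu\Big[E_\alpha(-\mu t^\alpha)\varphi_{kl}+\int_0^{t_0}(t-\tau)^{\alpha-1}E_{\alpha,\alpha}(-\mu(t-\tau)^\alpha)f_{kl}(\tau)\,d\tau\Big]
\]
and replace $t$ by $z$. For $z\in\Sigma(t_0,\frac{(2-\alpha)\pi}{2\alpha})$ and $\tau\in(0,t_0)$, the point $z-\tau$ lies in the sector $|{\rm Arg}\,w|<\frac{(2-\alpha)\pi}{2\alpha}$. Hence $(z-\tau)^\alpha$ satisfies $|{\rm Arg}(z-\tau)^\alpha|<\pi-\frac{\alpha\pi}2$, where the standard estimate $|E_{\alpha,\beta}(-\mu w^\alpha)|\le C/(1+\mu|w|^\alpha)$ holds. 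This yields locally uniform convergence of the series in $H_0^2$ on compact subsets, since $z-\tau$ stays away from $0$ there, which gives decay in $\mu$. Analyticity of the limit follows by Weierstrass' theorem.
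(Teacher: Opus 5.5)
Your proposal follows the paper's proof essentially step by step. Part (i) goes through Lemma \ref{dplem1}; your remark that the denominator can vanish at isolated points when \eqref{discr}, \eqref{paramineq} are not assumed is a small improvement on the paper. In part (ii), your divided-difference fix amounts to writing the $\varphi$-kernel as $g(\breve\lambda_k)+(\varkappa\lambda_k+d-\hat\lambda_k)\frac{g(\hat\lambda_k)-g(\breve\lambda_k)}{\breve\lambda_k-\hat\lambda_k}$, which is exactly the paper's $E^1_{\alpha,1}(-\breve\lambda_k t^\alpha)+\vartheta_k q_k(t)$ with the integral form \eqref{qkwkshort}. It is then followed by the same interpolation bounds and the same sectorial estimate for the analytic continuation.

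Two small corrections. The source-term bound should read $\mu t^{\alpha-1}E_{\alpha,\alpha}(-\mu t^\alpha)\le C\mu^{r_1}t^{\alpha r_1-1}$: the exponent of $t$ is $\alpha(1-\theta)-1$, not $\alpha\theta-1$. Also, the divided-difference and coincident-root terms in the continuation require the sectorial bound \eqref{mlf1} for the $\gamma=2$ Prabhakar functions as well, not only for $\gamma=1$.
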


Proof of Theorem \ref{dpthm1} is contained in Appendix of the paper.

\medskip
\noindent {\bf Remark 1} By means of embedding theorems it is possible to show that the assumptions
\eqref{fass2} are satisfied if
 $f|_{(0,t_0)\times\Omega},\chi|_{(0,t_0)\times\Omega}\in W_2^1((0,t_0); {\mathcal D}^{r_1})$ for some $r_1>0$.

\medskip
In case $a=0$ we can deduce from Theorem \ref{dpthm1} a uniqueness and existence result for the independent problem  \eqref{e1}, \eqref{e3}, \eqref{e5} for $u$. Such a statement
is formulated below and it follows if 
 we set e.g. $\chi=0$, $\psi=0$, $\varkappa=1$, $b=d=0$ in \eqref{e2}, \eqref{e4}, \eqref{e6}.

\begin{corollary}\label{cor1} Assume that $a=0$. 
\item{\rm (i)} Let \eqref{e1}, \eqref{e3}, \eqref{e5} have a solution $u\in \mathcal{W}_{\alpha,p}$ for some $p\in (1,\infty)$ 
and there exist $\sigma\in\R$ such that $e^{-\sigma t}u \in L_1((0,\infty);L_2(\Omega))$.
If $f=0$, $\varphi=0$ then $u=0$. 
\item{\rm (ii)} Let $p\in ({1\over\alpha},\infty)$, 
\beq\label{cor11}
\varphi\in {\mathcal D}^{r}\;\;\mbox{for some $r>1-{1\over p\alpha}$},
\eeq
$f(t,\cdot)=0$, $t>t_0$, 
for some $t_0>0$ and 
\beq
 \label{cor12}&&\hskip-1truecm \left.\begin{split}
&f|_{(0,t_0)\times\Omega}\in L_\infty((0,t_0); {\mathcal D}^{r_1}),
\;\sum_{k=1}^\infty \lambda_k^{2r_1}\|f_k\|_{L_\infty(0,t_0)}^2<\infty
\\
&\mbox{for some  $r_1>0$}.\end{split}\right\}
 \eeq
Then \eqref{e1}, \eqref{e3}, \eqref{e5} has a solution $u\in \mathcal{W}_{\alpha,p}$.  Moreover,
$$e^{-\sigma t}u\in L_1((0,\infty);H_0^2(\Omega)),\; \sigma>0,$$ 
and $u(t,\cdot)$, $t>t_0$, can be extended to $u(z,\cdot)$, $z\in \Sigma (t_0,{(2-\alpha)\pi\over 2\alpha})$, 
that is analytic as a function with values in $H_0^2(\Omega)$.
\end{corollary}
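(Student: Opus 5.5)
The plan is to derive the corollary directly from Theorem \ref{dpthm1}. We embed the scalar problem \eqref{e1}, \eqref{e3}, \eqref{e5} (with $a=0$) into the coupled system \eqref{e1}--\eqref{e6} by choosing the second equation artificially. We set $\chi=0$, $\psi=0$, $\varkappa=1$, $b=d=0$. Then \eqref{e2}, \eqref{e4}, \eqref{e6} become the homogeneous problem $D_t^\alpha v-\Delta v=0$, $v|_{\partial\Omega}=0$, $v(0,\cdot)=0$. Because $a=0$, equation \eqref{e1} contains no $v$ and decouples. Thus $(u,v)$ solves the system exactly when $u$ solves the scalar problem and $v$ solves the homogeneous problem.

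For part (i), let $u\in\mathcal W_{\alpha,p}$ solve the scalar problem with $f=0$, $\varphi=0$ and $e^{-\sigma t}u\in L_1((0,\infty);L_2(\Omega))$. The pair $(u,0)$ then lies in $(\mathcal W_{\alpha,p})^2$ and solves the system with zero data. The exponential integrability holds trivially for $v=0$. Theorem \ref{dpthm1}(i) therefore gives $u=0$. Alternatively, one can argue directly from Lemma \ref{dplem1}: formula \eqref{prop1} with zero data yields $U_{kl}=0$, so $u_{kl}=0$ for all $k,l$, and hence $u=0$.

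For part (ii), we verify the hypotheses of Theorem \ref{dpthm1}(ii) for the chosen parameters. Inequality \eqref{kappaineq} holds since $\kappa,\varkappa>0$ and $c,d\ge0$. Since $ab=0$, condition \eqref{discr} reduces to a square being nonnegative. Condition \eqref{paramineq} reads $0\le\min\{c^2,0\}=0$, which is true. Assumption \eqref{phiass} follows from \eqref{cor11} together with $\psi=0\in\mathcal D^r$. Assumption \eqref{fass2} follows from \eqref{cor12} together with $\chi=0$; here $\|f_k\|$ in \eqref{cor12} is read as the $f_{kl}$ sum. Theorem \ref{dpthm1}(ii) then yields a solution $(u,v)\in(\mathcal W_{\alpha,p})^2$ together with \eqref{uexp} and the analytic extension of $(u,v)$ to $\Sigma(t_0,\frac{(2-\alpha)\pi}{2\alpha})$ with values in $(H_0^2(\Omega))^2$. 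Its first component $u$ solves \eqref{e1}, \eqref{e3}, \eqref{e5}, since \eqref{e1} with $a=0$ does not involve $v$. Projecting onto the first component gives the integrability and analyticity claims for $u$.

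The only point needing care is the admissibility of the parameter choice in Lemma \ref{lambdaklemma}, i.e.\ conditions \eqref{discr} and \eqref{paramineq} with $d=0$. These are satisfied because $ab=0$, so there is no real obstacle. (Lemma \ref{lambdaklemma} then gives $\hat\lambda_k=\min\{\kappa\lambda_k+c,\lambda_k\}$ and $\breve\lambda_k=\max\{\kappa\lambda_k+c,\lambda_k\}$, both comparable to $\lambda_k$.)
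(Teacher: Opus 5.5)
Your proposal is correct and follows the paper's route: the paper obtains Corollary \ref{cor1} from Theorem \ref{dpthm1} through the same auxiliary choice $\chi=0$, $\psi=0$, $\varkappa=1$, $b=d=0$, with the decoupling coming from $a=0$. Your explicit check that \eqref{discr} and \eqref{paramineq} hold trivially because $ab=0$ simply fills in a step the paper leaves implicit.
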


\section{Uniqueness for inverse problems}\label{secinv}

\subsection{Preliminary results}

Let us denote by $\Phi$ the trace operator that assigns to a function defined in $\Omega$ its normal derivative defined in $\Gamma$, i.e.
\beq\label{Phidef}
(\Phi w)(x')={\partial\over\partial\nu}w(x'),\; x'\in\Gamma.
\eeq
Moreover, we denote $\gamma_{kl}=\Phi{\rm z}_{kl}$. It is known that 
\beq\label{linind}
\forall k\in\N \;\;\mbox{the vectors $\gamma_{kl}$, $l=1,\ldots,\mu_k$, are linearly independent}.
\eeq
This can be proved by means of the elliptic continuation argument (see e.g. \cite{Kianetal}). In the one-dimensional case \eqref{linind} also follows by a simple 
computation: $\gamma_{k1}=-{d\over dx}\sqrt{2\over \pi}\sin (kx)|_{x=0}=-\sqrt{2\over \pi}k\ne 0$, $k\in\N$.

\begin{lemma}\label{iplem0}
Let   $p\in ({1\over\alpha},\infty)$ and $a,b,c,d$, $\varphi$, $\psi$ satisfy \eqref{discr}, \eqref{paramineq}, \eqref{phiass}. 
Let 
 $f(t,\cdot)=\chi(t,\cdot)=0$, $t>t_0$, 
for some $t_0>0$, the relations \eqref{fass2} be valid and 
$(u,v)\in (\mathcal{W}_{\alpha,p})^2$ be the solution \eqref{e1} - \eqref{e6}. Moreover, let us define
\beq\label{Ydef}
Y=\C\;\;\mbox{in case $N=1$},\quad Y=L_2(\Gamma)\;\;\mbox{in case $N\ge 2$}
\eeq
  and assume
 \beq\label{iplem01}  &&\sum_{k=1}^\infty\sum_{l=1}^{\mu_k}\|\gamma_{kl}\|_Y  |\varphi_{kl}|<\infty,\; \sum_{k=1}^\infty\sum_{l=1}^{\mu_k}\|\gamma_{kl}\|_Y \|f_{kl}\|_{L_1(0,t_0)} <\infty,
 \\ \label{iplem02}
  &&\sum_{k=1}^\infty\sum_{l=1}^{\mu_k} \|\gamma_{kl}\|_Y |\psi_{kl}|<\infty,\;
 \sum_{k=1}^\infty\sum_{l=1}^{\mu_k} \|\gamma_{kl}\|_Y \|\chi_{kl}\|_{L_1(0,t_0)}<\infty.
 \eeq
 Then  $e^{-\sigma t}\Phi u\in L_1((0,\infty);Y)$, $\sigma >0$, and
 \beq\label{iplem03}
 \begin{split}
&({\mathcal L}\Phi u(t,\cdot))(s)
\\
&=
\sum_{k=1}^\infty\sum_{l=1}^{\mu_k} {(s^\alpha\!+\!\varkappa\lambda_k\!+\!d)(F_{kl}(s)\!+\!s^{\alpha-1}\varphi_{kl})\!-\!a({\mathcal X}_{kl}(s)\!+\!s^{\alpha-1}\psi_{kl})\over (s^\alpha+\breve\lambda_k)(s^\alpha+\hat\lambda_k)}\gamma_{kl}
\end{split}
\eeq
for ${\rm Re}s>0$, where $\breve\lambda_{k}$ and $\hat\lambda_{k}$ are given by  \eqref{lambdad}. The series on the right-hand side of 
\eqref{iplem03} can be analytically extended to $\Sigma(0,\pi)$,
 has finite one-sided limits at the negative part of real axis and these limits can be moved under the series. 
\end{lemma}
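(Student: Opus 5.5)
The plan is to work mode by mode and then sum in $Y$ using the weighted summability hypotheses \eqref{iplem01}, \eqref{iplem02}. By Theorem \ref{dpthm1}(ii) the solution exists in $(\mathcal{W}_{\alpha,p})^2$. Its Fourier coefficients obey \eqref{thm1k}, and $e^{-\sigma t}u\in L_1((0,\infty);H_0^2(\Omega))$ for $\sigma>0$. Since $\Phi$ is bounded from $H^2(\Omega)$ to $Y$ (trace theorem for $N\ge2$, point evaluation of the derivative for $N=1$), the first claim $e^{-\sigma t}\Phi u\in L_1((0,\infty);Y)$ follows at once.

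To get the series representation, I would write $\Phi u(t,\cdot)=\sum_{k,l}u_{kl}(t)\gamma_{kl}$. To justify this, note that the $H^2$-convergent Fourier series of $u(t,\cdot)\in H_0^2(\Omega)$ (which holds for a.e. $t$) is mapped by the continuous operator $\Phi$ into a series converging in $Y$. I would then bound each term via \eqref{thm1k}:
\[
e^{-\sigma t}|u_{kl}(t)|\le c_0e^{-\sigma t}\big\{|\varphi_{kl}|+|\psi_{kl}|+t^{\alpha-1}*(|f_{kl}|+|\chi_{kl}|)\big\}.
\]
Integrating over $(0,\infty)$ gives $\|e^{-\sigma t}u_{kl}\|_{L_1}\le C_\sigma\{|\varphi_{kl}|+|\psi_{kl}|+\|f_{kl}\|_{L_1(0,t_0)}+\|\chi_{kl}\|_{L_1(0,t_0)}\}$, using Young's inequality and $f_{kl}=\chi_{kl}=0$ for $t>t_0$. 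Multiplying by $\|\gamma_{kl}\|_Y$ and summing, \eqref{iplem01}, \eqref{iplem02} show that $\sum_{k,l}e^{-\sigma t}u_{kl}\gamma_{kl}$ converges absolutely in $L_1((0,\infty);Y)$. Hence the Laplace transform can be interchanged with the sum. Lemma \ref{dplem1} together with the factorization \eqref{factor} then yields \eqref{iplem03} for ${\rm Re}\,s>0$.

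For the analytic extension, each term is analytic in $\Sigma(0,\pi)$. Indeed, $F_{kl}$ and ${\mathcal X}_{kl}$ are entire, because $f_{kl},\chi_{kl}$ are supported in $[0,t_0]$. Also $s^\alpha$ is analytic off $(-\infty,0]$, and $s^\alpha+\hat\lambda_k\ne0$ there since $|{\rm Arg}\,s^\alpha|<\alpha\pi<\pi$ and $\hat\lambda_k>0$ by Lemma \ref{lambdaklemma}. The main task is a uniform bound permitting the Weierstrass M-test on sets $K\cap\overline{\Sigma(0,\pi)}$ with $K$ compact and away from $0$; this closure includes both banks of the negative axis. For $s$ in such a set with $|s|\le R$, $|F_{kl}(s)|\le e^{Rt_0}\|f_{kl}\|_{L_1(0,t_0)}$, and similarly for ${\mathcal X}_{kl}$.

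The key estimate, which I expect to be the main obstacle, is $|s^\alpha+\mu|\ge c_\alpha(|s|^\alpha+\mu)$ for $\mu>0$ and $|{\rm Arg}\,s^\alpha|\le\alpha\pi<\pi$. It holds because two complex numbers whose arguments differ by less than $\pi-\delta$ cannot cancel: $|z+\mu|\ge\sin(\delta/2)(|z|+\mu)$. Combining this with \eqref{lamineq} ($\hat\lambda_k,\breve\lambda_k\sim\lambda_k$), the ratio $(s^\alpha+\varkappa\lambda_k+d)/((s^\alpha+\breve\lambda_k)(s^\alpha+\hat\lambda_k))$ is bounded uniformly in $k$, and $a/((s^\alpha+\breve\lambda_k)(s^\alpha+\hat\lambda_k))$ is even smaller. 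Consequently each term is bounded by $C_K\|\gamma_{kl}\|_Y\{|\varphi_{kl}|+|\psi_{kl}|+\|f_{kl}\|_{L_1}+\|\chi_{kl}\|_{L_1}\}$, uniformly on $K$ including the boundary values $s=re^{\pm i\pi}$. Uniform convergence of analytic functions gives analyticity of the sum in $\Sigma(0,\pi)$. Each term has continuous extensions to the two banks of $(-\infty,0)$, obtained by replacing $s^\alpha$ with $r^\alpha e^{\pm i\alpha\pi}$. Uniform convergence up to the boundary then yields finite one-sided limits and allows them to be moved under the sum.
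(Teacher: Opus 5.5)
Your proposal is correct and follows essentially the same route as the paper: the bound \eqref{thm1k} together with \eqref{iplem01}, \eqref{iplem02} justifies exchanging the Laplace integral with the modal sum (the paper invokes Tonelli's theorem where you use absolute convergence in $L_1((0,\infty);Y)$), after which \eqref{prop1} and \eqref{factor} give \eqref{iplem03}, and a uniform bound on compact sets gives analyticity and the one-sided limits; that bound combines the estimates on the entire functions $F_{kl},{\mathcal X}_{kl}$ with a lower bound for $|s^\alpha+\mu|$. The only differences are cosmetic: you use the sector estimate $|s^\alpha+\mu|\ge\cos(\alpha\pi/2)(|s|^\alpha+\mu)$ where the paper uses $|s^\alpha+\mu|\ge \mu\sin(\alpha\pi)$, and you obtain the boundary limits from uniform convergence up to both banks rather than from dominated convergence, and both versions work.
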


\noindent{\it Proof}. In view of \eqref{uexp} and $\Phi\in {\mathcal B}(H_0^2(\Omega);Y)$, we have  $e^{-\sigma t}\Phi u\in L_1((0,\infty);Y)$, $\sigma >0$, and
\beq\label{lem0t2}
({\mathcal L}\Phi u(t,\cdot))(s)\!=\!\int_0^\infty\!\! e^{-st}\Phi\sum_{k=1}^\infty\sum_{l=1}^{\mu_l} u_{kl}(t){\rm z}_{kl} dt\!=\!\int_0^\infty\!\! e^{-st}\sum_{k=1}^\infty\sum_{l=1}^{\mu_l}  u_{kl}(t)\gamma_{kl} dt
\eeq
for ${\rm Re}s>0$. On the other hand, using \eqref{thm1k}, \eqref{iplem01}, \eqref{iplem02}  we deduce
\beq\nonumber
&&\hskip-7truemm \sum_{k=1}^\infty\sum_{l=1}^{\mu_k}\int_0^\infty \! |e^{-st}  u_{kl}(t)| \|\gamma_{kl}\|_Ydt\le c_0\sum_{k=1}^\infty\sum_{l=1}^{\mu_{k}} \|\gamma_{kl}\|_Y
\int_0^\infty\!\! e^{-{\rm Re}s t}
\big[|\varphi_{kl}|+|\psi_{kl}|
\\ \nonumber
&&\hskip-7truemm+t^{\alpha-1}*(|f_{kl}|+|\chi_{kl}|)(t)\big]dt
=c_0 \int_0^\infty\!\! e^{-{\rm Re}s t}dt \sum_{k=1}^\infty\sum_{l=1}^{\mu_k} \|\gamma_{kl}\|_Y (|\varphi_{kl}|+|\psi_{kl}|)
\\ \label{lem0t3}
&&\hskip-7truemm+
c_0  \int_0^\infty\!\! e^{-{\rm Re}s t}t^{\alpha-1}dt \sum_{k=1}^\infty\sum_{l=1}^{\mu_k} \|\gamma_{kl}\|_Y  \int_0^{t_0}\!\! e^{-{\rm Re}s t}(|f_{kl}(t)|+|\chi_{kl}(t)|)dt<\infty
\eeq
for ${\rm Re}s>0$.
Therefore, due to Tonelli's theorem 
we can change the order of summation and integration on the right-hand side of \eqref{lem0t2}. We obtain
$({\mathcal L}\Phi u(t,\cdot))(s)=\sum\limits_{k=1}^\infty\sum\limits_{l=1}^{\mu_k}  U_{kl}(s)\gamma_{kl}$, ${\rm Re}s>0$. Using  \eqref{prop1}
and  \eqref{factor} we reach the formula 
 \eqref{iplem03} for ${\rm Re}s>0$. 
 
The relations $f_{kl}(t)=0$, $\chi_{kl}(t)=0$, $t>t_0$, imply that $F_{kl}(s)$, ${\mathcal X}_{kl}(s)$, are entire. Moreover, 
from \eqref{lamineq} and $0<\alpha<1$ it follows that  
\beq\label{lem0t5}
|s^\alpha+\breve\lambda_k|, |s^\alpha+\hat\lambda_k|\ge c_1\lambda_k \sin(\alpha\pi)\ge c_1\lambda_1 \sin(\alpha\pi)>0,\; s\in\C,\; k\in\N.
\eeq
Therefore, all addends under the series 
in \eqref{iplem03} are analytic in $\Sigma (0,\pi)$ and have finite one-sided limits at the negative part of the real axis. Observing \eqref{lem0t5}, the inequalities  
\beq\label{lem0t4}
\begin{split}
&|F_{kl}(s)|\!\le\! \max\{1;e^{-{\rm Re}s t_0}\}\|f_{kl}\|_{L_1(0,t_0)},\, s\in\C
\\
&|{\mathcal X}_{kl}(s)|\!\le\! \max\{1;e^{-{\rm Re}s t_0}\}\|\chi_{kl}\|_{L_1(0,t_0)},\; s\in\C,
\end{split}
\eeq
and \eqref{iplem01}, \eqref{iplem02} we see that in every compact subset $D$ of $\C$,  the series \eqref{iplem03} can  be estimated by a converging series that 
is independent of $s\in D$. Therefore, \eqref{iplem03} converges uniformly in $D$. Consequently, it defines an analytic function in  
$\Sigma(0,\pi)$. Due to the dominated convergence theorem,  it has finite one-sided limits at the negative part of the real axis and these limits can be moved under the series.
 \hfill $\Box$

\medskip
\noindent {\bf Remark 2}.
Sufficient conditions for  \eqref{iplem01}, \eqref{iplem02} are $\varphi,\psi\in {\mathcal D}^n$, $f|_{(0,t_0)\times\Omega}
 ,\break\chi|_{(0,t_0)\times\Omega}\in L_1((0,t_0);{\mathcal D}^n)$, where $n$ is an integer greater than ${N+2\over 2}$.  
Let us show this for $\varphi$. Assuming $\varphi\in {\mathcal D}^n$ we have 
\beqst\nonumber
&&\varphi_{kl}=\int_\Omega \varphi {\rm z}_{kl} dx=-{1\over\lambda_k}\int_\Omega \varphi \Delta{\rm z}_{kl} dx=-{1\over\lambda_k}\int_\Omega \Delta\varphi\, {\rm z}_{kl} dx
\\ \label{rem21}
&&=\ldots=(-1)^n {1\over\lambda_k^n}\int_\Omega (\Delta)^n\varphi\, {\rm z}_{kl} dx\quad\Rightarrow\quad |\varphi_{kl}|\le {1\over \lambda_k^n}\|\varphi\|_{{\mathcal D}^n}.
\eeqst
To estimate the eigenvalues from below, 
we introduce the following sequences:
$\dot\lambda_m=\lambda_k$, $\dot{\rm z}_m={\rm z}_{kl}$, $m=\sum\limits_{j=1}^{k-1}\mu_j+l$.
It is well-known that the number of eigenvalues counting multiplicities  of Dirichlet Laplacian less or equal to $\lambda$ is equivalent to $c\lambda^{N/2}$ as $\lambda\to\infty$, where $c$ is a 
constant. This implies that there exists a constant $\dot c$ such that $\dot\lambda_m\ge \dot c m^{2\over N}$. Summing up, in case $n>{N+2\over 2}$ we obtain
\beqst
&&\sum_{k=1}^\infty\sum_{l=1}^{\mu_k}\|\gamma_{kl}\|_Y|\varphi_{kl}|\le \|\varphi\|_{{\mathcal D}^n}\sum_{k=1}^\infty\sum_{l=1}^{\mu_k}{\|\gamma_{kl}\|_Y\over \lambda_k^n}
=\|\varphi\|_{{\mathcal D}^n}\sum_{m=1}^\infty{\|\Phi \dot {\rm z}_m\|_Y\over \dot\lambda_m^n}
\\
&&\le {\rm const} \sum_{m=1}^\infty{\|\dot {\rm z}_m\|_{H_0^2(\Omega)}\over \dot\lambda_m^n}\le {\rm const} \sum_{m=1}^\infty{1\over \dot\lambda_m^{n-1}}
\le {\rm const} \sum_{m=1}^\infty{1\over m^{2(n-1)\over N}}<\infty.
\eeqst

\begin{lemma}\label{iplem1} Let $J\in\N$,  functions
 $G_{k,j}(z)$, $k\in\N$, $j=1,\ldots,J$, be analytic in $\C\setminus \{0\}$ with values in $Y$, where $Y$ is given by \eqref{Ydef}, and functions $R_{k,j}(z)$, $k\in\N$, $j=1,\ldots,J$, be meromorphic in $\C$ with values in $\C$. Let 
the poles of  $R_{k,j}(z)\in\N$, $k\in\N$, $j=1,\ldots,J$, be located on the lines $P=\{z\, :\, {\rm Arg}z=\pm\pi(1-\alpha)\}$ and
the $(z_1,z_2)$-dependent series 
$$
\sum_{k=1}^\infty \sum_{j=1}^J R_{k,j}(z_1)G_{k,j}(z_2)
$$
be uniformly convergent in every compact subset of $(\C\setminus P)\times (\C\setminus  \{0\})$. Consider the functions
$$
Q(n,z)=\sum_{k=1}^\infty \sum_{j=1}^J R_{k,j}(z)G_{k,j}(z^{1\over\alpha}e^{i2\pi n\over\alpha}),\; z\in \C\setminus P,\; n\in \{0\}\cup \N.
$$
If $Q(0,z)$ vanishes for real $z$ in some positive interval then $Q(n,z)=0$, $z\in \C$, $n\in\N$. 
\end{lemma}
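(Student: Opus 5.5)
The plan is to regard every $Q(n,\cdot)$ as a branch of a single analytic function defined on the Riemann surface of the logarithm, and then to propagate the vanishing of $Q(0,\cdot)$ from one branch to all the others by analytic continuation. Because of the assumed uniform convergence on compacta, the two-variable series
$$
H(z_1,z_2)=\sum_{k=1}^\infty\sum_{j=1}^J R_{k,j}(z_1)G_{k,j}(z_2)
$$
is analytic in $(\C\setminus P)\times(\C\setminus\{0\})$ with values in $Y$: each partial sum is analytic in each variable, and uniform limits preserve this. With this notation $Q(n,z)=H\big(z,z^{1/\alpha}e^{i2\pi n/\alpha}\big)$.

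The first step is to get vanishing of $Q(0,\cdot)$ on a whole sector. Let $S_0=\{z:|{\rm Arg}\,z|<\pi(1-\alpha)\}$ be the component of $\C\setminus P$ that contains the positive real axis. On $S_0$ the principal power $z\mapsto z^{1/\alpha}$ is analytic and never $0$, so $Q(0,\cdot)=H(\cdot,\cdot^{1/\alpha})$ is analytic on $S_0$. It vanishes on a real interval, so the identity theorem gives $Q(0,z)=0$ for all $z\in S_0$.

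Next I lift to the universal cover. Write $z=e^{\zeta}$ with $\zeta$ ranging over the strip of $\zeta$'s lying above $\C\setminus P$, and set
$$
\widetilde Q(\zeta)=H\big(e^{\zeta},e^{\zeta/\alpha}\big).
$$
Moving $\zeta$ to $\zeta+2\pi i n$ leaves $z_1=e^\zeta$ unchanged and multiplies $z_2$ by $e^{i2\pi n/\alpha}$. Hence $\widetilde Q(\zeta+2\pi i n)=Q(n,e^\zeta)$ whenever ${\rm Im}\,\zeta\in(-\pi,\pi]$. The lemma therefore reduces to showing $\widetilde Q\equiv0$ along every horizontal strip translate. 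Since $\widetilde Q$ vanishes on the strip $|{\rm Im}\,\zeta|<\pi(1-\alpha)$, it is enough to continue $\widetilde Q$ analytically through all of $\zeta\in\C$ minus the discrete preimages of the poles of the $R_{k,j}$. On the open strips lying over the components of $\C\setminus P$, the function $\widetilde Q$ is already analytic by the first step. Once continuation is established, the identity theorem on the connected domain propagates the zero, and restricting to the sheet over ${\rm Im}\,\zeta\in(-\pi,\pi]$ shifted by $2\pi n$ gives $Q(n,z)=0$.

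The main obstacle is crossing the lines ${\rm Im}\,\zeta=\pm\pi(1-\alpha)+2\pi m$, that is, the lines $P$. Each $R_{k,j}$ extends meromorphically across $P$, but the series is only assumed to converge uniformly away from $P$, so the continuation of $\widetilde Q$ across $P$ has to be justified. My first attempt would be to cross $P$ only at non-pole points and to show that the one-sided boundary values of $\widetilde Q$ from the two adjacent strips coincide along an arc. An analytic function on one side with a matching boundary value on the other then glues by Morera's theorem/Painlevé's continuity principle into a function analytic across the arc, and since $\widetilde Q\equiv0$ on one side, the identity theorem forces it to vanish on the other.

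To make this matching work I would try two things. One is to use the termwise meromorphy of $R_{k,j}$ together with $G_{k,j}$ being analytic in $z_2$ on all of $\C\setminus\{0\}$, freezing $z_2$ near $z_1^{1/\alpha}$, to pass the one-sided limits under the sum, in the spirit of the dominated-convergence argument of Lemma~\ref{iplem0}. The other, if that does not suffice, is to avoid $P$ entirely by moving along the $z_2$-variable. For fixed $z_1\in S_0$, the function $z_2\mapsto H(z_1,z_2)$ is analytic on $\C\setminus\{0\}$, so rotating $z_2$ around the origin never meets a singularity. The vanishing on the diagonal $z_2=z_1^{1/\alpha}$ would then have to be spread to $z_2=z_1^{1/\alpha}e^{i2\pi n/\alpha}$ through the analytic dependence on $\zeta$ described above. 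I expect this crossing step to be where the real work lies; everything else is the identity theorem.
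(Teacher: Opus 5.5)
Your lifting $\widetilde Q(\zeta)=H(e^{\zeta},e^{\zeta/\alpha})$, with $\widetilde Q(\zeta+2\pi i n)=Q(n,e^{\zeta})$, is correct. It is the same mechanism as the paper's proof, which passes from $Q(0,\cdot)$ to $Q(1,\cdot)$ using the branch $q(z)$ of $z^{1/\alpha}$ on the plane slit along $(0,\infty)$ and then inducts; that is your shift $\zeta\mapsto\zeta+2\pi i$ taken one sheet at a time. But your proposal stops at the only nontrivial step. The strips over $S_0=\{|{\rm Arg}\,z|<\pi(1-\alpha)\}$ and $S_1=\{\pi(1-\alpha)<|{\rm Arg}\,z|\le\pi\}$ are separated by the lines lying over $P$, and neither of your two suggestions gets across them. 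The hypotheses give no control of the series as $z_1\to P$, so there is nothing to dominate with when you pass one-sided limits under the sum. (In Lemma~\ref{iplem0} the domination came from the explicit lower bound on $|s^\alpha+\breve\lambda_k|$ and $|s^\alpha+\hat\lambda_k|$, not from any abstract assumption.) Rotating $z_2$ for fixed $z_1\in S_0$ does not help either. You only know that $H(z_1,\cdot)$ vanishes at the single point $z_1^{1/\alpha}$, and the only link to the points $z_1^{1/\alpha}e^{i2\pi n/\alpha}$ is through $\zeta$, which means crossing $P$ again. Your phrase ``discrete preimages of the poles'' also silently assumes that the poles of all the $R_{k,j}$ together form a locally finite set, and that is not among the hypotheses.

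In fact the crossing cannot be justified from the stated hypotheses. Take $P$ closed, so that $z_1$ ranges over the open set $U=\C\setminus(P\cup\{0\})$ on which $Q(n,\cdot)$ lives. Since $\widehat{\C}\setminus U$ is connected, Runge's theorem gives polynomials $p_K$ converging uniformly on compact subsets of $U$ to the function that is $0$ on $S_0$ and $1$ on $S_1$. Set $J=1$, $R_{k,1}=p_k-p_{k-1}$ with $p_0=0$, and let $G_{k,1}$ be a fixed nonzero constant $g\in Y$. Every hypothesis holds and $Q(0,\cdot)=0$ on $(0,\infty)$, yet $Q(n,z)=g$ on $S_1$ for every $n$.

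The paper's own proof glosses over the same point. Its first ``by analytic continuation'' goes from the positive axis to all of $\Sigma(0,\pi)\setminus P$, which is disconnected, and the induction step does the same in $S\setminus P$. What is really needed is uniform convergence on compact subsets of $(\C\setminus\mathcal P)\times(\C\setminus\{0\})$, where $\mathcal P$ is the locally finite set of all poles. This does hold in Theorems~\ref{ip1thm} and \ref{ip2thm}: the poles $-e^{\pm i\pi\alpha}(\kappa\lambda_k+c)$, respectively $-e^{\pm i\pi\alpha}\breve\lambda_k$ and $-e^{\pm i\pi\alpha}\hat\lambda_k$, tend to infinity, and for large $k$ the $R_{k,j}$ are bounded on bounded sets uniformly in $k$. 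Under that hypothesis your $\widetilde Q$ is analytic on $\C$ minus a closed discrete set, which is connected. So $\widetilde Q$ vanishes identically, and $Q(n,\cdot)=0$ off the poles for every $n$ at once. That would be a complete and somewhat cleaner version of the paper's argument. As written, however, your proposal has a genuine gap at the crossing of $P$.
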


\noindent{\it Proof}. Let $q(z)$ be analytic in an open set $S\subset \C\setminus \{0\}$ with values in $\C$. Then, in virtue of the assumptions of the lemma,
$\sum\limits_{k=1}^\infty\sum\limits_{j=1}^J R_{k,j}(z)G_{k,j}(q(z))$ is analytic in $S\setminus P$. Using this statement for $q(z)=z^{1\over\alpha}e^{i2\pi n\over\alpha}$, we obtain that
 $Q(n,z)$ is analytic in $\Sigma(0,\pi)\setminus P$ for any $n\in\N$. 
Since $Q(0,z)$ vanishes for real $z$ in some positive
interval, by analytic continuation we have $Q(0,z)=0$, $z\in \C$. Next, let us define $S=\C\setminus\{0\}\setminus\{z\,:\, {\rm Arg}z=0\}$
 and
$$q(z)=\left\{\begin{array}{ll} z^{1\over\alpha} &\mbox{if ${\rm Arg} z\in (0,\pi]$}\\ 
z^{1\over\alpha}e^{i2\pi\over\alpha} &\mbox{if ${\rm Arg} z\in (-\pi,0)$}.\end{array}\right.$$
Then we have
$$\sum\limits_{k=1}^\infty\sum\limits_{j=1}^J R_{k,j}(z)G_{k,j}(q(z))=\left\{\begin{array}{ll} Q(0,z) &\mbox{if ${\rm Arg} z\in (0,\pi]\setminus \{-\pi(\alpha-1)\}$},\\ 
Q(1,z) &\mbox{if ${\rm Arg} z\in (-\pi,0)\setminus \{\pi(\alpha-1)\}.$}\end{array}\right.$$
Function $q(z)$ is analytic in $S$. This implies that $\sum\limits_{k=1}^\infty\sum\limits_{j=1}^J R_{k,j}(z)G_{k,j}(q(z))$
is analytic in $S\setminus P$. By analytic continuation, the relation $Q(0,z)= 0$, ${\rm Arg} z\in (0,\pi]$, extends to $Q(1,z)=0$, ${\rm Arg} z\in (-\pi,0)$, 
and in turn to $Q(1,z)=0$, $z\in\C$. Continuing 
in a similar way we prove $Q(n,z)=0$, $z\in \C$, $n\in\N$. \hfill $\Box$

\begin{lemma}
\label{iplem2}
Assume that $\alpha$ is irrational. Let $y\in [0,2\pi)$ and $\epsilon>0$. Then there exists $n\in\N$ such that $\left|e^{{i2\pi n\over\alpha}}-e^{iy}\right|<\epsilon$. 
\end{lemma}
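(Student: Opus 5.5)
The plan is to reduce the statement to the classical density of the sequence $\{n\theta\}$ modulo $1$ for irrational $\theta$, taking $\theta={1\over\alpha}$. Since $\alpha$ is irrational, so is $\theta={1\over\alpha}$: if ${1\over\alpha}={p\over q}$ were rational, then $\alpha={q\over p}$ would be rational as well. Write $e^{i2\pi n/\alpha}=e^{i2\pi\{n\theta\}}$, where $\{\cdot\}$ denotes the fractional part. It then suffices to find $n\in\N$ with $|2\pi\{n\theta\}-y|$ small, because $|e^{ia}-e^{ib}|\le|a-b|$.

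To get density, I will use the pigeonhole (Dirichlet) argument.

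First, fix $M\in\N$ with ${2\pi\over M}<\epsilon$. Consider the $M+1$ numbers $\{m\theta\}$, $m=1,\ldots,M+1$, which lie in $[0,1)$. They are pairwise distinct, since $\{m\theta\}=\{m'\theta\}$ with $m\ne m'$ would force $(m-m')\theta\in\mathbb{Z}$, contradicting irrationality. Splitting $[0,1)$ into $M$ subintervals of length ${1\over M}$, two of these numbers fall into the same subinterval. This gives $m_1>m_2$ such that, for $n_0=m_1-m_2\in\N$, the quantity $\delta:=n_0\theta-K$ (with the appropriate integer $K$) satisfies $0<|\delta|<{1\over M}$. The value $\delta\neq 0$ again follows from irrationality.

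Second, consider the multiples $jn_0$, $j\in\N$. We have $e^{i2\pi jn_0\theta}=e^{i2\pi j\delta}$, and the points $2\pi j\delta$, $j=1,2,\ldots$, march monotonically (in one direction) along the real line with step $2\pi|\delta|<\epsilon$. They therefore pass within $\epsilon$, modulo $2\pi$, of every point of $[0,2\pi)$.

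Concretely, if $\delta>0$, choose $j\ge 1$ with $2\pi j\delta\in[y,y+2\pi\delta)$; if $y=0$ one may instead target $y+2\pi$, which keeps $j\ge1$. If $\delta<0$, choose $j\ge1$ with $2\pi j\delta\in(y-2\pi-2\pi|\delta|,\,y-2\pi]$. In either case the resulting $n=jn_0\in\N$ satisfies
$$\left|e^{i2\pi n\over\alpha}-e^{iy}\right|\le 2\pi|\delta|<\epsilon.$$

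The main (mild) obstacle is bookkeeping: ensuring $n\ge1$ rather than allowing $n=0$ or negative $n$, and handling the sign of $\delta$. Both are dealt with by shifting the target $y$ by $\pm2\pi$ as indicated. Alternatively, one could simply cite Kronecker's (or Weyl's) theorem on the density of $\{n\theta\}$ for irrational $\theta$.
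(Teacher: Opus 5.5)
Your proof is correct and takes the same route as the paper: reduce to the density of the fractional parts of $n/\alpha$ in $[0,1)$, then finish with $|e^{ia}-e^{ib}|\le|a-b|$. The paper simply cites that density result, whereas you prove it with Dirichlet's pigeonhole argument and also check explicitly that $1/\alpha$ is irrational and that $n\ge 1$, so your version is self-contained.
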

 
\noindent{\it Proof}. 
Let $\lfloor\cdot\rfloor$ denote the floor function. The fact that $\alpha$ is irrational implies that the sequence 
${n\over\alpha}-\lfloor{n\over\alpha}\rfloor$, $n\in\N$, is dense in $[0,1)$ \cite{Staib}. Therefore, the sequence 
$2\pi({n\over\alpha}-\lfloor{n\over\alpha}\rfloor)$, $n\in\N$, is dense in $[0,2\pi)$. Let  $y\in [0,2\pi)$ and $\epsilon>0$. Then there exists $n\in\N$ such that 
$|2\pi({n\over\alpha}-\lfloor{n\over\alpha}\rfloor)-y|<\epsilon$. This yields
\beqst
&&\left|e^{{i2\pi n\over\alpha}}-e^{iy}\right|=\left|e^{i2\pi \left(\lfloor {n\over\alpha}\rfloor + {n\over\alpha}-\lfloor {n\over\alpha}\rfloor \right)}-e^{iy}\right|=
\left|e^{i2\pi \left({n\over\alpha}-\lfloor {n\over\alpha}\rfloor \right)}-e^{iy}\right|
\\
&&=\left|\int_y^{2\pi \left({n\over\alpha}-\lfloor {n\over\alpha}\rfloor \right)}ie^{i\xi}d\xi\right|\le 
\left|2\pi \left({n\over\alpha}-\left\lfloor {n\over\alpha}\right\rfloor \right)-y\right|<\epsilon.
\eeqst
  \hfill $\Box$
  
\subsection{Uniqueness for IP1}

\begin{theorem}\label{ip1thm}
Let $\alpha$ be irrational, $a=0$,
  $p\in ({1\over\alpha},\infty)$ and $\varphi$ satisfy \eqref{cor11}.  Let $f(t,\cdot)=0$, $t>t_0$, 
for some $t_0>0$, \eqref{cor12} hold and 
$u\in \mathcal{W}_{\alpha,p}$ be the solution \eqref{e1}, \eqref{e3}, \eqref{e5}.  Moreover, let 
$\gamma_{kl}=\Phi {\rm z}_{kl}$, where $\Phi$ is given by \eqref{Phidef}, and
\eqref{iplem01} be valid, where $Y$ is defined in \eqref{Ydef}.
 If there exists $t_1>t_0$ such that 
 \beq\label{ip1thm2}
 \Phi u(t,\cdot)=0, \; t\in (t_0,t_1),
 \eeq
 then $f=0$, $\varphi=0$ and $u=0$. 
\end{theorem}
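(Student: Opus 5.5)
Since $a=0$, formula \eqref{iplem03} of Lemma \ref{iplem0} (applied with $\chi=0$, $\psi=0$, $\varkappa=1$, $b=d=0$, so that $\breve\lambda_k$ or $\hat\lambda_k$ equals $\lambda_k+c$ after cancellation) gives
$$
({\mathcal L}\Phi u)(s)=\sum_{k=1}^\infty\sum_{l=1}^{\mu_k}{F_{kl}(s)+s^{\alpha-1}\varphi_{kl}\over s^\alpha+\kappa\lambda_k+c}\gamma_{kl},\quad {\rm Re}s>0 .
$$
I will first use \eqref{ip1thm2} and the analyticity of $\Phi u(z,\cdot)$ in the sector $\Sigma(t_0,{(2-\alpha)\pi\over 2\alpha})$ (Corollary \ref{cor1}) to conclude $\Phi u(t,\cdot)=0$ for all $t>t_0$. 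Writing ${\mathcal L}\Phi u(s)=\int_0^{t_0}e^{-st}\Phi u\,dt$, this is an entire function of $s$. On the other hand, the series above extends analytically to $\Sigma(0,\pi)$ with one-sided limits on the negative axis, and these limits can be taken under the sum (Lemma \ref{iplem0}). Since the left-hand side is entire, the jump across the negative real axis must vanish. For $s=re^{\pm i\pi}$ we have $s^\alpha=r^\alpha e^{\pm i\pi\alpha}$, and $F_{kl}$ is entire, so the vanishing jump gives an identity in $r>0$ of the form
$$
\sum_{k,l}\Big[{F_{kl}(-r)+s_+^{\alpha-1}\varphi_{kl}\over s_+^\alpha+\kappa\lambda_k+c}-{F_{kl}(-r)+s_-^{\alpha-1}\varphi_{kl}\over s_-^\alpha+\kappa\lambda_k+c}\Big]\gamma_{kl}=0 .
$$

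Next I will rewrite this identity as $Q(0,z)=0$ with $z=r^\alpha$ ranging over a positive interval, where the $R_{k,j}$ are the meromorphic factors $1/(ze^{\pm i\pi\alpha}+\kappa\lambda_k+c)$, with poles on the lines $P$ of Lemma \ref{iplem1}. The $G_{k,j}$ are built from $F_{kl}(-z^{1/\alpha})\gamma_{kl}$ and $z^{(\alpha-1)/\alpha}$-type factors multiplied by $\varphi_{kl}\gamma_{kl}$, written through an auxiliary argument $w=z^{1/\alpha}$ so that they are analytic in $w\in\C\setminus\{0\}$. The uniform convergence required in Lemma \ref{iplem1} comes from \eqref{iplem01}, \eqref{lem0t4} and the bound $|ze^{\pm i\pi\alpha}+\kappa\lambda_k+c|\ge{\rm const}\,\lambda_k$ off compact neighbourhoods of $P$. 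Lemma \ref{iplem1} then yields $Q(n,z)=0$ for all $n$, i.e. the same identity with $F_{kl}(-z^{1/\alpha}e^{i2\pi n/\alpha})$ and the power factors rotated by $e^{i2\pi n(\alpha-1)/\alpha}$, while the rational factors in $z$ are unchanged.

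By Lemma \ref{iplem2} ($\alpha$ irrational), the rotations $e^{i2\pi n/\alpha}$ are dense in the unit circle. Using continuity in $n$-parameter of the terms and dominated convergence, I will pass to the limit and obtain the identity for every $w=z^{1/\alpha}e^{iy}$, $y\in[0,2\pi)$. This introduces an independent continuous variable: the identity holds for all $(z,w)$ with $|w|=|z|^{1/\alpha}$ and arbitrary phase of $w$. By analyticity in $w$, it then holds for independent $z$ (in $\C\setminus P$) and $w$ on circles, hence by analytic continuation in $|w|$ as well. Fixing $w$, the expression is a meromorphic function of $z$ with simple poles at $z=-(\kappa\lambda_k+c)e^{\mp i\pi\alpha}$, which are distinct for distinct $k$. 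Taking residues at each pole gives, for every $k$, $\sum_l [F_{kl}(-w)+(\text{power})\varphi_{kl}]\gamma_{kl}=0$ for all $w$. The linear independence \eqref{linind} then gives $F_{kl}(-w)+c(w)\varphi_{kl}=0$ for all $w$. Separating the entire part $F_{kl}$ from the non-entire (branch) factor multiplying $\varphi_{kl}$ yields $\varphi_{kl}=0$ and $F_{kl}\equiv0$, hence $f=0$ and $\varphi=0$. Finally $u=0$ follows from Corollary \ref{cor1}(i).

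The main obstacle is the middle step: carefully arranging the jump identity into the exact form required by Lemma \ref{iplem1}, in particular keeping the $s^{\alpha-1}$ factors and $F_{kl}(-r)$ as functions of the rotated variable only, and then justifying the density limit so that a genuinely independent second variable appears. Once the identity holds for independent $(z,w)$, the residue argument together with \eqref{linind} is routine.
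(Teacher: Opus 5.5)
Your proposal is correct and follows the paper's proof. The shared steps are the vanishing jump of $\mathcal{L}\Phi u$ across the negative real axis, Lemma~\ref{iplem1} to reach all rotations $e^{i2\pi n/\alpha}$, density via Lemma~\ref{iplem2}, residues at the poles $-(\kappa\lambda_n+c)e^{\mp i\pi\alpha}$, and \eqref{linind}. The only variation is harmless: you apply the identity theorem in $w$ on the circle $|w|=|z|^{1/\alpha}$ to decouple $z$ and $w$ before taking residues, whereas the paper differentiates in $y$, sets $y=0$, takes residues of every derivative and concludes from the Taylor expansion at $z_n^{1/\alpha}$; the two are equivalent.

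The arrangement you flag as the main obstacle is settled in the paper by writing $r^{\alpha-1}=\varrho\,\varrho^{-1/\alpha}$ with $\varrho=r^\alpha$. The factor $\varrho e^{\pm i\pi\alpha}$ goes into $R_{k,2}$, and one takes $G_{k,2}(w)=-w^{-1}\sum_l\varphi_{kl}\gamma_{kl}$. A literal $w^{\alpha-1}$ would not be single-valued on $\C\setminus\{0\}$, which Lemma~\ref{iplem1} requires. Consequently the factor you separate at the end is the simple pole $(\kappa\lambda_n+c)/w$, not a branch factor.
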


\noindent{\it Proof}. By Lemma \ref{iplem0}, the assumption $a=0$ and \eqref{lambdad} we have the formula
\beq\label{ip1t4}
({\mathcal L}\Phi u(t,\cdot))(s)=
\sum_{k=1}^\infty\sum_{l=1}^{\mu_k} {(F_{kl}(s)+s^{\alpha-1}\varphi_{kl})\over s^\alpha+\kappa\lambda_k+c}\gamma_{kl}
\eeq
in the space $Y$ and the series on the right-hand side of \eqref{ip1t4} analytically extends to all $s\in\Sigma(0,\pi)$.
On the other hand, due $\Phi\in{\mathcal B}(H_0^2(\Omega),Y)$,  and
Corollary \ref{cor1}, $\Phi u(z,\cdot)$ is analytic in $\Sigma (t_0,{(2-\alpha)\pi\over 2\alpha})$. Using \eqref{ip1thm2} and analytic continuation we obtain 
$\Phi u(t,\cdot)=0$, $t>t_0$. This implies that $({\mathcal L}\Phi u(t,\cdot))(s)$ is  entire, hence
continuous at 
 the negative part of the real axis. Let us set $s=re^{i\theta}$, $r>0$, in \eqref{ip1t4},  compute the difference of 
limits in the processes $\theta\to\pm\pi$ and move the limits under the series (the latter step is possible due to Lemma \ref{iplem0}). We obtain
 \beqst
 0=\sum_{k=1}^\infty\sum_{l=1}^{\mu_l} \left\{{F_{kl}(-r)-r^{\alpha-1}e^{i\pi\alpha}\varphi_{kl}\over r^\alpha e^{i\pi\alpha}+\kappa\lambda_k+c}-
 {F_{kl}(-r)-r^{\alpha-1}e^{-i\pi\alpha}\varphi_{kl}\over r^\alpha e^{-i\pi\alpha}+\kappa\lambda_k+c}\right\}\gamma_{kl}.
 \eeqst
Denoting $\varrho=r^\alpha$ we have
 \beq\label{ip1t5}
 0=\sum_{k=1}^\infty\sum_{l=1}^{\mu_k} \left\{{F_{kl}(-\varrho^{1\over\alpha})\!-\!\varrho e^{i\pi\alpha}\varrho^{-{1\over\alpha}}\varphi_{kl}\over \varrho e^{i\pi\alpha}+\kappa\lambda_k+c}-
 {F_{kl}(-\varrho^{1\over\alpha})\!-\!\varrho e^{-i\pi\alpha}\varrho^{-{1\over\alpha}}\varphi_{kl}\over \varrho e^{-i\pi\alpha}+\kappa\lambda_k+c}\right\}\gamma_{kl}
 \eeq
 for $\varrho>0$.
Next, let us define  
\beq\label{ip1t6}
&&G_{k,1}(z)= \sum_{l=1}^{\mu_k}F_{kl}(-z)\gamma_{kl},\; G_{k,2}(z)=-{1\over z}\sum_{l=1}^{\mu_k}\varphi_{kl}\gamma_{kl},\\
\label{ip1t7}
&&\begin{split}
&R_{k,1}(z)={1\over ze^{i\pi\alpha}+\kappa\lambda_k+c}-
{1\over ze^{-i\pi\alpha}+\kappa\lambda_k+c},\;
\\
& R_{k,2}(z)={ze^{i\pi\alpha}\over ze^{i\pi\alpha}+\kappa\lambda_k+c}-
{ze^{-i\pi\alpha}\over ze^{-i\pi\alpha}+\kappa\lambda_k+c}.\end{split}
\eeq
Since $f_{kl}(t)=0$ for $t>t_0$, the functions $F_{kl}(z)$ are entire. Therefore, $G_{k,j}(z)$, $k\in\N$, $j=1,2$, are analytic in $\C\setminus \{0\}$. 
The poles  of meromorphic functions $R_{k,j}(z)$, $k\in\N$, $j=1,2$, are located in $P=\{z\, :\, {\rm Arg}z=\pm\pi(1-\alpha)\}$.
Moreover, the series $\sum\limits_{k=1}^\infty\sum\limits_{j=1}^2 R_{k,j}(z_1)G_{k,j}(z_2)$ 
is uniformly convergent in every compact subset of $(\C\setminus P)\times (\C\setminus  \{0\})$. This follows from  \eqref{iplem01} and the first relation in \eqref{lem0t4}.
Let us define $Q(n,z)= \sum\limits_{k=1}^\infty\sum\limits_{j=1}^2 R_{k,j}(z)G_{k,j}(z^{1\over\alpha}e^{i2\pi n\over\alpha})$, $n\in\N$. 
The right-hand side of \eqref{ip1t5} coincides with 
$Q(0,\varrho)$ for real $\varrho>0$. Consequently, due to \eqref{ip1t5} and
Lemma \ref{iplem1}  we have
\beq\label{ip1t8}
\sum\limits_{k=1}^\infty\sum\limits_{j=1}^2 R_{k,j}(z)G_{k,j}(z^{1\over\alpha}e^{i2\pi n\over\alpha})=0,\;\; z\in \C,\; n\in\N.
\eeq

{\it Now we have reached a point where we can introduce an additional continuous variable to the problem}. Let $z$ be some number in the set $\C\setminus (P\cup \{0\})$
and consider the series 
\beq\label{ip1thmserr}
\sum\limits_{k=1}^\infty\sum\limits_{j=1}^2 R_{k,j}(z)G_{k,j}^{(m)}(z^{1\over\alpha}e^{iy}),\; \;\mbox{where $y\in [0,2\pi]$ and $m\in\{0\}\cup\N$.}
\eeq 
Observing \eqref{ip1t6}, \eqref{ip1t7}, \eqref{iplem01} and the estimate
$$
|F_{kl}^{(m)}(s)|=\left|{d^m\over ds^m} \int_0^{t_0}e^{-st}f_{kl}(t)dt\right|\le t_0^m \max\{1;e^{-{\rm Re}s t_0}\}\|f_{kl}\|_{L_1(0,t_0)},
$$
$s\in\C$, $m\in\{0\}\cup\N$, we see that for any $m\in\{0\}\cup\N$, the series \eqref{ip1thmserr} can be estimated by a converging series that 
is independent of $y\in [0,2\pi]$. 
Therefore, by the dominated convergence theorem, it is possible to move limits and derivatives with respect to $y$  under the series \eqref{ip1thmserr}.

On the other hand, by Lemma \ref{iplem2}, for any $y\in [0,2\pi)$, there exists a sequence of integers $n(y)_q$, $q\in\N$, such that 
$e^{i2\pi n(y)_q\over\alpha}\to e^{iy}$ as $q\to\infty$. Therefore, setting $n=n(y)_q$ in \eqref{ip1t8} and letting $q\to\infty$, we deduce
\beqst
\sum\limits_{k=1}^\infty\sum\limits_{j=1}^2 R_{k,j}(z)G_{k,j}(z^{1\over\alpha}e^{iy})=0,\;\; y\in [0,2\pi).
\eeqst
Applying $m$ times the operator ${1\over z^{1\over\alpha}ie^{iy}}{d\over dy}$ we obtain
\beqst
\sum\limits_{k=1}^\infty\sum\limits_{j=1}^2 R_{k,j}(z)G_{k,j}^{(m)}(z^{1\over\alpha}e^{iy})=0,\;\; y\in [0,2\pi),\; m\in\{0\}\cup\N.
\eeqst
Setting $y=0$ we have
\beq\label{ip1t9}
\sum\limits_{k=1}^\infty\sum\limits_{j=1}^2 R_{k,j}(z)G_{k,j}^{(m)}(z^{1\over\alpha})=0,\;\;  m\in\{0\}\cup\N.
\eeq

Let us choose $n\in\N$ and denote $z_n=-e^{-{i\pi\alpha}}(\kappa\lambda_n+c)$. This is a pole of $R_{n,1}$ and $R_{n,2}$. 
Moreover, 
\beqst
\lim_{z\to z_n}(z-z_n)R_{n,1}(z)=e^{-i\pi\alpha},\; \lim_{z\to z_n}(z-z_n)R_{n,2}(z)|=z_n,
\eeqst
and in view of $\lambda_k\ne \lambda_n$, $k\ne n$, we have
\beqst
(z-z_n)R_{k,j}(z)|_{z=z_n}=0,\; k\ne n,\; j=1,2.
\eeqst
Thus, multiplying \eqref{ip1t9} by $z-z_n$ and passing to the limit $z\to z_n$ we obtain 
\beq\label{ip1t9a}
e^{-i\pi\alpha}G_{n,1}^{(m)}(z_n^{{1\over\alpha}})+z_n G_{n,2}^{(m)}(z_n^{{1\over\alpha}})=0,\; m\in\{0\}\cup\N.
\eeq
%Note that $z_m^{1\over\alpha}=(\kappa\lambda_m+c)e^{i\pi\left({1\over\alpha}-1\right)}$. 
Let us consider the function  $$G_n(z)=e^{-i\pi\alpha}G_{n,1}(z)+z_n G_{n,2}(z).$$ 
This is  analytic in $z\in \C\setminus\{0\}$ with a possible pole at $z=0$. 
The relations \eqref{ip1t9a} show that $G_n^{(m)}(z)$ vanishes at $z=z_n^{{1\over\alpha}}$ for all $m\in\{0\}\cup\N$.  Consequently, $G_n(z)$ vanishes everywhere, i.e.
 $$e^{-i\pi\alpha}G_{n,1}(z)+z_n G_{n,2}(z)=0,\; z\in \C.$$ Using \eqref{ip1t6}  we have
\beq\label{ip1t10}
e^{-i\pi\alpha}\sum_{l=1}^{\mu_n}F_{nl}(-z)\gamma_{nl}-{z_n\over z}\sum_{l=1}^{\mu_n}\varphi_{nl}\gamma_{nl}=0,\;\; z\in \C.
\eeq
Excluding the pole at the left-hand side of \eqref{ip1t10}, we have $\sum\limits_{l=1}^{\mu_n}\varphi_{nl}\gamma_{nl}=0$. 
Due to \eqref{linind} we have $\varphi_{nl}=0$, $l=1,\ldots,\mu_n$. 
 Now \eqref{ip1t10} reduces to
$\sum\limits_{l=1}^{\mu_n}F_{nl}(-z)\gamma_{nl}=0$, $z\in\C$. Using again \eqref{linind} we obtain $F_{nl}(z)=0$, $z\in\C$, $l=1,\ldots,\mu_n$. 
 This yields $f_{nl}=0$, $l=1,\ldots,\mu_n$.  Since $n\in\N$ is arbitrary, we have
 $\varphi=0$, $f=0$. 
 Finally, Corollary \ref{cor1} (i) implies $u=0$. \hfill $\Box$

\subsection{Uniqueness for IP2}

\begin{theorem}\label{ip2thm}
Let $\alpha$ be irrational, $a\ne 0$,  $p\in ({1\over\alpha},\infty)$ and $a,b,c,d$, $\varphi$, $\psi$ satisfy \eqref{discr}, \eqref{paramineq}, \eqref{phiass}. 
Let 
 $f(t,\cdot)=\chi(t,\cdot)=0$, $t>t_0$, 
for some $t_0>0$, the relations \eqref{fass2} be valid and 
$(u,v)\in (\mathcal{W}_{\alpha,p})^2$ be the solution \eqref{e1} - \eqref{e6}.
Moreover, let 
$\gamma_{kl}=\Phi {\rm z}_{kl}$, where $\Phi$ is given by \eqref{Phidef}, and
\eqref{iplem01}, \eqref{iplem02} be valid, where $Y$ is defined in \eqref{Ydef}.
Assume that
\beq\label{ip2thm1}
\breve\lambda_k\ne \breve\lambda_n,\, \hat\lambda_k\ne \hat\lambda_n,\, \hat\lambda_k\ne \breve\lambda_n\;\;\mbox{for}\;\; k,n\in\N,\; k\ne n,
\eeq
where $\breve\lambda_k$, $\hat\lambda_k$  are given by \eqref{lambdad}. 
 If there exists $t_1>t_0$ such that \eqref{ip1thm2} holds 
  then $f=\chi=0$, $\varphi=\psi=0$ and $u=v=0$. 
\end{theorem}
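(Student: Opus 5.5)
The plan is to follow the proof of Theorem \ref{ip1thm}. The new ingredients are the two-factor denominator in \eqref{iplem03} and the need to separate four unknown families $F_{kl},{\mathcal X}_{kl},\varphi_{kl},\psi_{kl}$ using only the poles of a single series.

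First, Lemma \ref{iplem0} gives formula \eqref{iplem03} for $({\mathcal L}\Phi u)(s)$ and its analytic extension to $\Sigma(0,\pi)$. By Theorem \ref{dpthm1}(ii), $\Phi u(z,\cdot)$ is analytic in $\Sigma(t_0,{(2-\alpha)\pi\over 2\alpha})$. Then \eqref{ip1thm2} gives $\Phi u=0$ for $t>t_0$, so the Laplace transform is entire. Taking the jump across the negative real axis at $s=re^{\pm i\pi}$ and substituting $\varrho=r^\alpha$ gives an identity of the form $Q(0,\varrho)=0$, $\varrho>0$. Here we take $J=4$ and
$$G_{k,1}(z)=\sum_l F_{kl}(-z)\gamma_{kl},\quad G_{k,2}(z)=\sum_l {\mathcal X}_{kl}(-z)\gamma_{kl},\quad G_{k,3}(z)=-{1\over z}\sum_l\varphi_{kl}\gamma_{kl},\quad G_{k,4}(z)=-{1\over z}\sum_l\psi_{kl}\gamma_{kl}.$$
The coefficients $R_{k,j}(z)$ are the differences of the rational functions $(ze^{\pm i\pi\alpha}+\varkappa\lambda_k+d)/[(ze^{\pm i\pi\alpha}+\breve\lambda_k)(ze^{\pm i\pi\alpha}+\hat\lambda_k)]$, $-a/[\cdots]$, and the same expressions multiplied by $ze^{\pm i\pi\alpha}$. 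Since $\breve\lambda_k,\hat\lambda_k>0$ by \eqref{lamineq}, the poles of these $R_{k,j}$ lie on $P$. Uniform convergence on compacts follows from \eqref{iplem01}, \eqref{iplem02}, \eqref{lem0t4} and \eqref{lem0t5}, so Lemma \ref{iplem1} applies. Combining it with Lemma \ref{iplem2} and the dominated-convergence argument of Theorem \ref{ip1thm} yields
$$\sum_k\sum_{j=1}^4 R_{k,j}(z)G^{(m)}_{k,j}(z^{1/\alpha})=0,\qquad m\ge0,$$
for $z\notin P\cup\{0\}$.

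Next, fix $n$ and take residues at $\breve z_n=-e^{-i\pi\alpha}\breve\lambda_n$ and $\hat z_n=-e^{-i\pi\alpha}\hat\lambda_n$. Assumption \eqref{ip2thm1} guarantees that these points are poles of the $n$-th terms only, and that they are simple poles when $\breve\lambda_n\ne\hat\lambda_n$. The residue at $\breve z_n$ gives, for all $m$,
$$(\breve\lambda_n'-)\ :\ e^{-i\pi\alpha}\big[(\varkappa\lambda_n+d-\breve\lambda_n)G_{n,1}-aG_{n,2}\big]^{(m)}(\breve z_n^{1/\alpha})+\breve z_n\big[(\varkappa\lambda_n+d-\breve\lambda_n)G_{n,3}-aG_{n,4}\big]^{(m)}(\breve z_n^{1/\alpha})=0,$$
up to a common nonzero factor $1/(\hat\lambda_n-\breve\lambda_n)$. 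Since the function in brackets is analytic in $\C\setminus\{0\}$ and all its derivatives vanish at one point, it vanishes identically. Isolating the $1/z$ pole part then separates the initial data from the sources: we get $(\varkappa\lambda_n+d-\breve\lambda_n)\varphi_{n\cdot}-a\psi_{n\cdot}=0$ and $(\varkappa\lambda_n+d-\breve\lambda_n)F_{n\cdot}-a{\mathcal X}_{n\cdot}=0$, both after applying \eqref{linind}. The pole at $\hat z_n$ gives the same relations with $\hat\lambda_n$ in place of $\breve\lambda_n$. Subtracting the two sets gives $(\breve\lambda_n-\hat\lambda_n)\varphi_{nl}=0$ and $(\breve\lambda_n-\hat\lambda_n)F_{nl}=0$. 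Hence $\varphi_{nl}=0$ and $f_{nl}=0$, and then $a\ne0$ gives $\psi_{nl}=0$ and ${\mathcal X}_{nl}=0$. Corollary-type uniqueness, Theorem \ref{dpthm1}(i), then gives $u=v=0$.

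The main obstacle is the degenerate case $\breve\lambda_n=\hat\lambda_n$, which is not excluded by \eqref{ip2thm1} and can occur when the discriminant in \eqref{discr} vanishes. There the pole is double and only one point is available. In that case I would use both the $(z-z_n)^{-2}$ and the $(z-z_n)^{-1}$ Laurent coefficients. The $(z-z_n)^{-2}$ coefficient gives the single combined relation above. The $(z-z_n)^{-1}$ coefficient involves the derivative in $z$ of $G(z^{1/\alpha})$ as well, but that derivative is already controlled because all $G^{(m)}$ vanish. It also contains a term with factor $e^{-i\pi\alpha}$ (from differentiating $ze^{-i\pi\alpha}+\varkappa\lambda_n+d$) that multiplies $G_{n,1}$ and $G_{n,3}$ alone, and I expect this term to separate the two unknowns. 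A secondary check is that the pole sets of different $k$ never coincide, which is exactly what \eqref{ip2thm1} provides.
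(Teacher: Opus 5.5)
Your reduction to $\sum_k\sum_{j=1}^4R_{k,j}(z)G^{(m)}_{k,j}(z^{1/\alpha})=0$ and your treatment of the case $\breve\lambda_n\ne\hat\lambda_n$ are the paper's proof. That treatment uses simple residues at $-e^{-i\pi\alpha}\breve\lambda_n$ and $-e^{-i\pi\alpha}\hat\lambda_n$, splits off the $1/z$ part, and solves the two $2\times2$ systems. The only real difference is the degenerate case $\breve\lambda_n=\hat\lambda_n$, which you leave at ``I expect''. Your plan does work there, but the justification has to be stated correctly.

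Use your indexing ($G_{n,1},\dots,G_{n,4}$ built from $F,{\mathcal X},\varphi,\psi$). Put $z_n=-e^{-i\pi\alpha}\breve\lambda_n$, $\beta=\varkappa\lambda_n+d-\breve\lambda_n$ and $\zeta=ze^{i\pi\alpha}+\breve\lambda_n=e^{i\pi\alpha}(z-z_n)$. Expand the $e^{i\pi\alpha}$-parts of $R_{n,j}$ in powers of $\zeta$. The $(z-z_n)^{-2}$ coefficients are $e^{-2i\pi\alpha}(\beta,-a,-\breve\lambda_n\beta,a\breve\lambda_n)$. The $(z-z_n)^{-1}$ coefficients are $e^{-i\pi\alpha}(1,0,\beta-\breve\lambda_n,-a)$.

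Let $H=\beta G_{n,1}-aG_{n,2}-\breve\lambda_n\beta G_{n,3}+a\breve\lambda_nG_{n,4}$. The double-pole relation says $H^{(m)}(z_n^{1/\alpha})=0$ for all $m$. At level $m$, the derivative contribution to the simple-pole coefficient is ${1\over\alpha}z_n^{1/\alpha-1}e^{-2i\pi\alpha}H^{(m+1)}(z_n^{1/\alpha})$. It therefore vanishes by the double-pole relation at level $m+1$; the individual $G^{(m)}_{n,j}$ do not vanish. What remains is
$$\big[G_{n,1}+(\beta-\breve\lambda_n)G_{n,3}-aG_{n,4}\big]^{(m)}(z_n^{1/\alpha})=0,\qquad m\ge 0.$$
This relation involves $\psi$ too, not only $G_{n,1}$ and $G_{n,3}$, but it contains no ${\mathcal X}$. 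Analyticity and \eqref{linind} then give $F_{nl}\equiv0$ and $(\beta-\breve\lambda_n)\varphi_{nl}=a\psi_{nl}$. From $H\equiv0$ you also get $\beta F_{nl}=a{\mathcal X}_{nl}$ and $\beta\varphi_{nl}=a\psi_{nl}$. Hence ${\mathcal X}_{nl}=0$ and $\breve\lambda_n\varphi_{nl}=0$, and everything vanishes because $\breve\lambda_n\ge c_1\lambda_n>0$ and $a\ne0$.

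The paper organizes the degenerate case differently. From $H\equiv0$ it first extracts $\psi_{nl}=\beta\varphi_{nl}/a$ and ${\mathcal X}_{nl}=\beta F_{nl}/a$, and substitutes these back into the series. The $n$-th terms then collapse to $\widetilde R_{n,1}G_{n,1}+\widetilde R_{n,2}G_{n,2}$ (paper's indexing: $F$ and $\varphi$), which have only simple poles. An ordinary residue gives the IP1-type identity $e^{-i\pi\alpha}G_{n,1}+z_nG_{n,2}\equiv0$.

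This carries exactly the information of your simple-pole coefficient. Inserting $a\psi_{nl}=\beta\varphi_{nl}$ into your relation gives $F_{nl}(-w)+\breve\lambda_n\varphi_{nl}/w$, the same identity. The paper's substitution avoids differentiating $G^{(m)}_{n,j}(z^{1/\alpha})$ in $z$ and the second-order Laurent bookkeeping. Your route is more mechanical and needs no guessed substitution.
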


\noindent{\it Proof}. By Lemma \ref{iplem0} the formula \eqref{iplem03} is valid for $({\mathcal L}\Phi u(t,\cdot))(s)$ in the space $Y$.
On the other hand, due to $\Phi\in{\mathcal B}(H_0^2(\Omega),Y)$ and Theorem \ref{dpthm1}, 
 $\Phi u(z,\cdot)$ is analytic in $\Sigma (t_0,{(2-\alpha)\pi\over 2\alpha})$. Therefore, from \eqref{ip1thm2} we obtain 
$\Phi u(t,\cdot)=0$, $t>t_0$. This implies that $({\mathcal L}\Phi u(t,\cdot))(s)$ is  entire, hence continuous at the negative part 
of the real axis. Let us set  $s=re^{i\theta}$, $r>0$, in \eqref{iplem03}, compute the difference of 
limits    in the processes $\theta\to\pm\pi$ and denote $\varrho=r^\alpha$. We obtain
\beq\nonumber
&&\hskip-.7truecm 0=\sum_{k=1}^\infty\sum_{l=1}^{\mu_k}\Bigg\{{1\over (\varrho e^{i\pi\alpha}\!+\!\breve\lambda_k)(\varrho e^{i\pi\alpha}\!+\!\hat\lambda_k)}
\\ \nonumber
&&\hskip-.0truecm\times \left[ (\varrho e^{i\pi\alpha}\!+\!\varkappa\lambda_k\!+\!d)\left(F_{kl}(-\varrho^{1\over\alpha})\!-\!\varrho e^{i\pi\alpha}\varrho^{-{1\over\alpha}}\varphi_{kl}\right)
\right.
\\ \nonumber
&&\hskip-.0truecm\left.-a\left({\mathcal X}_{kl}(-\varrho^{1\over\alpha})\!-\!\varrho e^{i\pi\alpha}\varrho^{-{1\over\alpha}}\psi_{kl}\right)\right] 
\\  \nonumber
&&\hskip-.0truecm -{1\over (\varrho e^{-i\pi\alpha}\!+\!\breve\lambda_k)(\varrho e^{-i\pi\alpha}\!+\!\hat\lambda_k)}
\\ \nonumber
&&\hskip-0truemm \times\left[ (\varrho e^{-i\pi\alpha}\!+\!\varkappa\lambda_k\!+\!d)\left(F_{kl}(-\varrho^{1\over\alpha})\!-
\!\varrho e^{-i\pi\alpha}\varrho^{-{1\over\alpha}}\varphi_{kl}\right)\right.
\\ \label{ip2t5}
&&\hskip-0truemm\left.
-a\left({\mathcal X}_{kl}(-\varrho^{1\over\alpha})\!-\!\varrho e^{-i\pi\alpha}\varrho^{-{1\over\alpha}}\psi_{kl}\right)\right]\Bigg\}\gamma_{kl}
\eeq
for $\varrho>0$. Let us define the following functions:
\beq\label{ip2t6}
&&\hskip-12truemm \left.\begin{split}
&G_{k,1}(z)= \sum_{l=1}^{\mu_k}F_{kl}(-z)\gamma_{kl},\; G_{k,2}(z)=-{1\over z}\sum_{l=1}^{\mu_k}\varphi_{kl}\gamma_{kl},\;
\\
&G_{k,3}(z)= \sum_{l=1}^{\mu_k}{\mathcal X}_{kl}(-z)\gamma_{kl},\; G_{k,4}(z)=-{1\over z}\sum_{l=1}^{\mu_k}\psi_{kl}\gamma_{kl},\;
\end{split}\right\}
\\
\label{ip2t7}
&&\hskip-12truemm\left.\begin{split}
&R_{k,1}(z)={z e^{i\pi\alpha}+\varkappa\lambda_k+d\over (ze^{i\pi\alpha}+\breve\lambda_k)(ze^{i\pi\alpha}+\hat\lambda_k)}-
{z e^{-i\pi\alpha}+\varkappa\lambda_k+d\over (ze^{-i\pi\alpha}+\breve\lambda_k)(ze^{-i\pi\alpha}+\hat\lambda_k)},
\\
& R_{k,2}(z)={ze^{i\pi\alpha}(z e^{i\pi\alpha}+\varkappa\lambda_k+d)\over (ze^{i\pi\alpha}+\breve\lambda_k)(ze^{i\pi\alpha}+\hat\lambda_k)}-
{ze^{-i\pi\alpha}(z e^{-i\pi\alpha}+\varkappa\lambda_k+d)\over (ze^{-i\pi\alpha}+\breve\lambda_k)(ze^{-i\pi\alpha}+\hat\lambda_k)},
\\[1ex]&R_{k,3}(z)=-{a\over (ze^{i\pi\alpha}+\breve\lambda_k)(ze^{i\pi\alpha}+\hat\lambda_k)}+
{a\over (ze^{-i\pi\alpha}+\breve\lambda_k)(ze^{-i\pi\alpha}+\hat\lambda_k)},
\\
& R_{k,4}(z)=-{aze^{i\pi\alpha}\over (ze^{i\pi\alpha}+\breve\lambda_k)(ze^{i\pi\alpha}+\hat\lambda_k)}+
{aze^{-i\pi\alpha}\over (ze^{-i\pi\alpha}+\breve\lambda_k)(ze^{-i\pi\alpha}+\hat\lambda_k)}.
\end{split}\right\}
\eeq
Since $f_{kl}(t)=0$, $\chi_{kl}(t)=0$ for $t>t_0$, the functions $F_{kl}(z)$ and ${\mathcal X}_{kl}(z)$ are entire. Hence, $G_{k,j}(z)$ are analytic in $\C\setminus \{0\}$. 
The poles  of $R_{k,j}(z)$, $k\in\N$, $j=1,\ldots,4$, are contained in $P=\{z\, :\, {\rm Arg}z=\pm\pi(1-\alpha)\}$.
In addition, the series $\sum\limits_{k=1}^\infty\sum\limits_{j=1}^4 R_{k,j}(z_1)G_{k,j}(z_2)$ 
is uniformly convergent in every compact subset of $(\C\setminus P)\times (\C\setminus  \{0\})$. This follows from  \eqref{iplem01}, \eqref{iplem02} and  \eqref{lem0t4}.
Define $Q(n,z)=\sum\limits_{k=1}^\infty\sum\limits_{j=1}^4 R_{k,j}(z)G_{k,j}(z^{1\over\alpha}e^{i2\pi n\over\alpha})$, $n\in\N$. 
The right-hand side of \eqref{ip2t5} coincides with 
$Q(0,\varrho)$ for real $\varrho>0$.
Consequently, by \eqref{ip2t5} and Lemma \ref{iplem1} we have 
\beq\label{ip2t8}
\sum\limits_{k=1}^\infty\sum\limits_{j=1}^4 R_{k,j}(z)G_{k,j}(z^{1\over\alpha}e^{i2\pi n\over\alpha})=0, \; z\in\C,\; n\in\N.
\eeq

As in the proof of Theorem \ref{ip1thm}, for any $y\in [0,2\pi)$, we choose a sequence of integers $n(y)_q$, $q\in\N$, such that 
$e^{i2\pi n(y)_q\over\alpha}\to e^{iy}$ as $q\to\infty$. Setting $n=n(y)_q$  in \eqref{ip2t8}   and passing to the limit $q\to\infty$ we obtain 
$\sum\limits_{k=1}^\infty\sum\limits_{j=1}^4 R_{k,j}(z)G_{k,j}(z^{1\over\alpha}e^{iy})=0$, $y\in [0,2\pi)$.
Applying $m$ times the operator ${1\over z^{1\over\alpha}ie^{iy}}{d\over dy}$ and setting $y=0$ we deduce
\beq\label{ip2t9}
\sum\limits_{k=1}^\infty\sum\limits_{j=1}^4 R_{k,j}(z)G_{k,j}^{(m)}(z^{1\over\alpha})=0,\;\;  m\in\{0\}\cup \N.
\eeq

Let $n$ be an arbitrary integer in $\N$. Our next aim is to show that
\beq\label{ip2tmm}
\varphi_{nl}=\psi_{nl}=0,\;\; F_{nl}(z)={\mathcal X}_{nl}(z)=0,\; z\in\C,\; l=1\ldots,\mu_n.
\eeq
Firstly, we consider the case $\hat\lambda_n=\breve\lambda_n$. Then
\beq\label{ip2t90}&&\left.
\begin{split}
&\hskip-5truemm R_{n,1}(z)={z e^{i\pi\alpha}+\varkappa\lambda_n+d\over (ze^{i\pi\alpha}+\breve\lambda_n)^2}-
{z e^{-i\pi\alpha}+\varkappa\lambda_n+d\over (ze^{-i\pi\alpha}+\breve\lambda_n)^2},
\\
&\hskip-5truemm R_{n,2}(z)={z e^{i\pi\alpha}(z e^{i\pi\alpha}+\varkappa\lambda_n+d)\over (ze^{i\pi\alpha}+\breve\lambda_n)^2}-
{z e^{-i\pi\alpha}(z e^{-i\pi\alpha}+\varkappa\lambda_n+d)\over (ze^{-i\pi\alpha}+\breve\lambda_n)^2},
\\[1ex]
&\hskip-5truemm R_{n,3}(z)=-{a\over (ze^{i\pi\alpha}+\breve\lambda_n)^2}+
{a\over (ze^{-i\pi\alpha}+\breve\lambda_n)^2},
\\
&\hskip-5truemm R_{n,4}(z)=-{aze^{i\pi\alpha}\over (ze^{i\pi\alpha}+\breve\lambda_n)^2}+
{aze^{-i\pi\alpha}\over (ze^{-i\pi\alpha}+\breve\lambda_n)^2}.\end{split}\right\}
\eeq
Define $z_n=-e^{-i\pi\alpha}\breve\lambda_n$, $n\in\N$, multiply \eqref{ip2t9} by $(z-z_n)^2$ and let $z\to z_n$. 
Observing the equalities
\beq\label{ipthm200}
(z-z_n)R_{k,j}(z)|_{z=z_n}=0,\; k\ne n,\, j=1,\ldots,4,
\eeq
that follow from  \eqref{ip2thm1} we 
obtain
\beqst
&&\hskip-9truemm e^{-i2\pi\alpha}(z_n e^{i\pi\alpha}\!+\!\varkappa\lambda_n\!+\!d)G_{n,1}^{(m)}(z_n^{1\over\alpha})+
z_ne^{-i\pi\alpha}(z_n e^{i\pi\alpha}\!+\!\varkappa\lambda_n\!+\!d)G_{n,2}^{(m)}(z_n^{1\over\alpha})
\\
&&-ae^{-i2\pi\alpha}G_{n,3}^{(m)}(z_n^{1\over\alpha})-az_ne^{-i\pi\alpha}G_{n,4}^{(m)}(z_n^{1\over\alpha})=0,\; m\in\{0\}\cup \N.
\eeqst
Taking the formula of $z_n$  into account we have
\beqst
&&\hskip-9truemm(\varkappa\lambda_n+d-\breve\lambda_n)G_{n,1}^{(m)}(z_n^{1\over\alpha})-\breve\lambda_n(\varkappa\lambda_n+d-\breve\lambda_n)
G_{n,2}^{(m)}(z_n^{1\over\alpha})
\\
&&-aG_{n,3}^{(m)}(z_n^{1\over\alpha})+a\breve\lambda_nG_{n,4}^{(m)}(z_n^{1\over\alpha})=0,\; m\in\{0\}\cup \N.
\eeqst
Since $G_{n,j}(z)$, $j=1,\ldots,4$,  are analytic in $\C\setminus\{0\}$, this implies
\beqst\label{ip2t91}
\begin{split}
&\hskip-9truemm(\varkappa\lambda_n+d-\breve\lambda_n)G_{n,1}(z)-\breve\lambda_n(\varkappa\lambda_n+d-\breve\lambda_n)
G_{n,2}(z)
\\
&-aG_{n,3}(z)+a\breve\lambda_nG_{n,4}(z)=0,\; z\in\C.
\end{split}
\eeqst
By means of \eqref{ip2t6} we have
\beqst
\begin{split}
&\hskip-9truemm\sum_{l=1}^{\mu_n}\Big\{(\varkappa\lambda_n+d-\breve\lambda_n)F_{nl}(-z)-a{\mathcal X}_{nl}(-z)
\\
&+{\breve\lambda_n\over z} \left((\varkappa\lambda_n+d-\breve\lambda_n)\varphi_{nl}-a\psi_{nl}\right)\Big\}\gamma_{nl}=0,\;
z\in \C.
\end{split}
\eeqst
In view of \eqref{linind} we obtain
\beq\label{ip2t95}
\begin{split}
&\hskip-9truemm(\varkappa\lambda_n+d-\breve\lambda_n)F_{nl}(-z)-a{\mathcal X}_{nl}(-z)
\\
&+{\breve\lambda_n\over z} \left((\varkappa\lambda_n+d-\breve\lambda_n)\varphi_{nl}-a\psi_{nl}\right)=0,\;
z\in \C,\; l=1,\ldots,\mu_n.
\end{split}
\eeq
Excluding the pole and observing that  $a\ne 0$ we have
\beq\label{ip2t96}
\psi_{nl}={\varkappa\lambda_n+d-\breve\lambda_n\over a}\varphi_{nl},\; l=1,\ldots,\mu_n.
\eeq
Using this in  \eqref{ip2t95} we get
\beq\label{ip2t97}
{\mathcal X}_{nl}(z)={\varkappa\lambda_n+d-\breve\lambda_n\over a}F_{nl}(z),\; z\in\C,\; l=1,\ldots,\mu_n.
\eeq
Next we will show that $\varphi_{nl}$ and $F_{nl}$ vanish. To this end we will simplify \eqref{ip2t9} and  perform another limit process with it.
In view of \eqref{ip2t96} and \eqref{ip2t97} it holds 
 $G_{n,3}(z)={\varkappa\lambda_n+d-\breve\lambda_n\over a}G_{n,1}(z)$ and $G_{n,4}(z)={\varkappa\lambda_n+d-\breve\lambda_n\over a}G_{n,2}(z)$.
Observing these relations and formulas \eqref{ip2t90}  we see that 
\eqref{ip2t9} reduces to 
\beq\label{ip2t9a}
\sum\limits_{k=1\atop k\ne n}^\infty\sum\limits_{j=1}^4  R_{k,j}(z)G_{k,j}^{(m)}(z^{1\over\alpha})+
\sum_{j=1}^2\widetilde R_{n,j}(z)G_{n,j}^{(m)}(z^{1\over\alpha})=0,\;\;  m\in\{0\}\cup \N,\; 
\eeq
where
\beqst
\begin{split}
&\widetilde R_{n,1}(z)={1\over ze^{i\pi\alpha}+\breve\lambda_n}-
{1\over ze^{-i\pi\alpha}+\breve\lambda_n},
\\
&\widetilde  R_{n,2}(z)={z e^{i\pi\alpha}\over ze^{i\pi\alpha}+\breve\lambda_n}-
{z e^{-i\pi\alpha}\over ze^{-i\pi\alpha}+\breve\lambda_n}.
\end{split}
\eeqst
Multiplying \eqref{ip2t9a} by $z-z_n$, letting $z\to z_n$ and taking \eqref{ipthm200} into account we obtain 
$$
e^{-i\pi\alpha}G_{n,1}^{(m)}(z_n^{1\over\alpha})+z_n G_{n,2}^{(m)}(z_n^{1\over\alpha})=0,\; m\in\{0\}\cup \N.
$$
This by the analyticity of $G_{n,j}$ implies $e^{-i\pi\alpha}G_{n,1}(z)+z_n G_{n,2}(z)=0$, $z\in\C$. Using \eqref{ip2t6} and \eqref{linind} we obtain the relations 
$e^{-i\pi\alpha}F_{nl}(-z)-{z_n\over z}\varphi_{nl}=0$, $z\in \C$, $l=1,\ldots,\mu_n$. These imply $\varphi_{nl}=0$ and $F_{nl}(z)=0$, $z\in\C$, for 
$l=1,\ldots,\mu_n$.  Finally, returning to \eqref{ip2t96} and \eqref{ip2t97} we obtain
$\psi_{nl}=0$ and ${\mathcal X}_{nl}(z)=0$, $z\in\C$, for $l=1,\ldots,\mu_n$. The relations \eqref{ip2tmm} are proved in the case $\hat\lambda_n=\breve\lambda_n$.

Secondly, we consider the case $\hat\lambda_n\ne \breve\lambda_n$. 
As before, we set $z_n=-e^{-i\pi\alpha}\breve\lambda_n$. Multiplying \eqref{ip2t9} by $z-z_n$, taking the limit $z\to z_n$ and observing the relations 
\eqref{ipthm200} we obtain the equality
\beqst 
&&{z_ne^{i\pi\alpha}+\varkappa\lambda_n+d\over  z_ne^{i\pi\alpha}+\hat\lambda_n}\left(e^{-i\pi\alpha}G_{n,1}^{(m)}( z_n^{1\over\alpha})+ z_nG_{n,2}^{(m)}( z_n^{1\over\alpha})\right)
\\
&&-{a\over \ z_ne^{i\pi\alpha}+\hat\lambda_n}\left(e^{-i\pi\alpha}G_{n,3}^{(m)}( z_n^{1\over\alpha})+ z_nG_{n,4}^{(m)}( z_n^{1\over\alpha})\right)=0,\; m\in \{0\}\cup\N.
\eeqst
After simplification we have
\beqst
(\varkappa\lambda_n\!+\!d\!-\!\breve\lambda_n)\left(G_{n,1}^{(m)}( z_n^{1\over\alpha})\!-\!\breve\lambda_nG_{n,2}^{(m)}( z_n^{1\over\alpha})\right)\!-\!
a\left(G_{n,3}^{(m)}( z_n^{1\over\alpha})\!-\!\breve\lambda_nG_{n,4}^{(m)}(z_n^{1\over\alpha})\right)=0
\eeqst
for  $m\in \{0\}\cup\N$.
This by the analyticity argument yields
\beqst
(\varkappa\lambda_n+d-\breve\lambda_n)\left(G_{n,1}(z)-\breve\lambda_nG_{n,2}(z)\right)-
a\left(G_{n,3}(z)-\breve\lambda_nG_{n,4}(z)\right)=0
\eeqst
for $z\in\C$. By means of \eqref{ip2t6} and \eqref{linind} we obtain
\beq\label{ip2t20}
\begin{split}
&(\varkappa\lambda_n+d-\breve\lambda_n)\Big(F_{nl}(-z)+{\breve\lambda_n\over z}\varphi_{nl}\Big)
-
a\Big({\mathcal X}_{nl}(-z)+{\breve\lambda_n\over z}\psi_{nl}\Big)=0,
\\
&\qquad z\in\C,\; l=1,\ldots,\mu_n.
\end{split}
\eeq
Next, we
set $\hat z_n=-e^{-i\pi\alpha}\hat\lambda_n$,  multiply \eqref{ip2t9} by $z-\hat z_n$ and take the limit $z\to\hat z_n$. 
Observing the relations 
\beqst
(z-\hat z_n)R_{k,j}(z)|_{z=\hat z_n}=0,\; k\ne n,\, j=1,\ldots,4,
\eeqst
that also follow from \eqref{ip2thm1}, 
 we after a simplification reach the 
expression
\beqst
(\varkappa\lambda_n\!+\!d\!-\!\hat\lambda_n)\left(G_{n,1}^{(m)}(\hat z_n^{1\over\alpha})\!-\!\hat\lambda_nG_{n,2}^{(m)}(\hat z_n^{1\over\alpha})\right)\!-\!
a\left(G_{n,3}^{(m)}(\hat z_n^{1\over\alpha})\!-\!\hat\lambda_nG_{n,4}^{(m)}(\hat z_n^{1\over\alpha})\right)=0
\eeqst
for $m\in \{0\}\cup \N$. 
This by the analyticity yields $$(\varkappa\lambda_n+d-\hat\lambda_n)\left(G_{n,1}(z)-\hat\lambda_nG_{n,2}(z)\right)-
a\left(G_{n,3}(z)-\hat\lambda_nG_{n,4}(z)\right)=0$$ for $z\in\C$. In view of of \eqref{ip2t6} and \eqref{linind} we get
\beq\label{ip2t21}
\begin{split}
&(\varkappa\lambda_n+d-\hat\lambda_n)\Big(F_{nl}(-z)+{\hat\lambda_n\over z}\varphi_{nl}\Big)
-
a\Big({\mathcal X}_{nl}(-z)+{\hat\lambda_n\over z}\psi_{nl}\Big)=0,
\\
&\qquad z\in\C,\; l=1,\ldots,\mu_n.
\end{split}
\eeq
We continue analyzing the system \eqref{ip2t20}, \eqref{ip2t21}.
Excluding the poles we obtain
the following $2\times 2$ linear systems for the pairs $\varphi_{nl}$, $\psi_{nl}$, where $l=1,\ldots,\mu_n$:
\beqst
(\varkappa\lambda_n+d-\breve\lambda_n)\varphi_{nl}-a\psi_{nl}=0,\;\; (\varkappa\lambda_{n}+d-\hat\lambda_n)\varphi_{nl}-a\psi_{nl}=0.
\eeqst
Since $\breve\lambda_n\ne \hat\lambda_n$ and $a\ne 0$, these systems are regular. 
The unique solutions are $\varphi_{nl}=\psi_{nl}=0$, $l=1\ldots,\mu_n$. Using these equalities in \eqref{ip2t20}, \eqref{ip2t21} we obtain the linear systems
for $F_{nl}(-z)$, ${\mathcal X}_{nl}(-z)$, $z\in\C$ for any $l=1,\ldots,\mu_n$:
\beqst
(\varkappa\lambda_n\!+\!d\!-\!\breve\lambda_n)F_{nl}(-z)\!-\!a{\mathcal X}_{nl}(-z)=0,\;
 (\varkappa\lambda_n\!+\!d\!-\!\hat\lambda_n)F_{nl}(-z)\!-\!a{\mathcal X}_{nl}(-z)\!=\!0.
\eeqst
The unique solutions are $F_{nl}(-z)={\mathcal X}_{nl}(-z)=0$, $z\in\C$, $l=1,\ldots,\mu_n$. We have shown that \eqref{ip2tmm} holds in the case $\breve\lambda_n\ne \hat\lambda_n$, too. 

Summing up, we have proved that \eqref{ip2tmm} is valid for all $n\in\N$. This yields $\varphi=\psi=0$ and $f=\chi=0$. 
 Finally, Theorem \ref{dpthm1} (i) implies $u=v=0$. Theorem is  proved.
\hfill $\Box$

\medskip
Let us interpret the condition
\eqref{ip2thm1} in some particular cases. If $\kappa=\varkappa$ then
\beqst
\begin{split}
&\breve\lambda_k=\kappa\lambda_k+{1\over 2}\left[c+d+\sqrt{(c-d)^2+4ab}\right],
\\
&\hat\lambda_k=\kappa\lambda_k+{1\over 2}\left[c+d-\sqrt{(c-d)^2+4ab}\right].
\end{split}
\eeqst
Therefore, \eqref{ip2thm1} holds iff 
$$
\lambda_n\ne \lambda_k +{\sqrt{(c-d)^2+4ab}\over\kappa},\;\; k,n\in\N,\, n>k.
$$
If $b=0$ then $\breve\lambda_k,\hat\lambda_k\in \{\kappa\lambda_k+c;\varkappa\lambda_k+d\}$, $k\in\N$. Thus, 
\eqref{ip2thm1} is valid iff
$$
\kappa\lambda_k+c\ne \varkappa\lambda_n+d,\;\; k,n\in\N,\, n\ne k.
$$

  %%%%%%%%%%%%%%%%%%%%%%%%%%%%%%%%

\section*{Appendix: Proof of Theorem \ref{dpthm1}}

Before proving Theorem \ref{dpthm1} we  provide a lemma concerning
 Prabhakar function 
$
E_{\alpha,\beta}^\gamma(z)={1\over\Gamma(\gamma)}\sum\limits_{n=0}^\infty {\Gamma(\gamma+n)z^n\over n!\Gamma(\alpha n+\beta)},
$ $\beta,\gamma\in\C$. 

\begin{lemma}\label{mlflem}
\begin{description}
\item{\rm (i)} If $\gamma>0$ then 
\beq\label{mlf1}
\forall \theta\in \big(0,\frac{(2-\alpha)\pi}{2}\big)\;\;\; \exists c_{\theta}\; :\; |E_{\alpha,\beta}^\gamma(-z)|\le {c_\theta\over (1+|z|)^\gamma},\; z\in\Sigma(0,\theta), 
\eeq
where $c_\theta$ is a $\theta$-dependent constant that also depends on $\alpha,\beta,\gamma$. If $\beta,\gamma,\lambda>0$ then
\beq\label{mlf2}
\left({\mathcal L}\left(t^{\beta-1}E_{\alpha,\beta}^\gamma(-\lambda t^\alpha)\right)\right)(s)={s^{\alpha\gamma-\beta}\over (s^\alpha+\lambda)^\gamma},\; {\rm Re}s>0.
\eeq
\item{\rm (ii)} Let  $w\in L_1(0,t_0)$, $\lambda>0$, $q\in\R$. Then the function
$$\phi(z)=\int_0^{t_0} (z-\tau)^q E_{\alpha,\beta}^\gamma(-\lambda (z-\tau)^\alpha)w(\tau)d\tau$$ 
is analytic in $\Sigma(t_0,\pi)$.
\end{description}
\end{lemma}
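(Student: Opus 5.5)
The plan is to treat the three assertions separately. The bound \eqref{mlf1} comes from a Hankel-type integral representation, \eqref{mlf2} from termwise Laplace transformation plus analytic continuation, and (ii) from a standard parameter-integral argument.

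\emph{Bound \eqref{mlf1}.} I will use the contour representation
$$
E_{\alpha,\beta}^\gamma(w)={1\over 2\pi i}\int_{Ha} e^{s}{s^{\alpha\gamma-\beta}\over (s^\alpha-w)^\gamma}\,ds .
$$
It can be checked by expanding $(s^\alpha-w)^{-\gamma}=s^{-\alpha\gamma}\sum_n {\Gamma(\gamma+n)\over n!\Gamma(\gamma)}w^ns^{-\alpha n}$ for $|s|^\alpha>|w|$ and applying Hankel's formula ${1\over 2\pi i}\int_{Ha}e^ss^{-\alpha n-\beta}ds=1/\Gamma(\alpha n+\beta)$. Then $w=-z$ with $z\in\Sigma(0,\theta)$ and $\theta<{(2-\alpha)\pi\over 2}$. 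The singular points satisfy $s^\alpha=-z$, i.e.\ $|{\rm Arg}\, s|=(\pi-|{\rm Arg}\, z|)/\alpha>(\pi-\theta)/\alpha>\pi/2$.

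\begin{itemize}
\item \emph{Contour.} I take $Ha$ to consist of the two rays ${\rm Arg}\, s=\pm\omega$ with $\pi/2<\omega<(\pi-\theta)/\alpha$, $\omega<\pi$, for $|s|\ge 1$, joined by the arc $|s|=1$. On the rays $|e^s|$ decays exponentially.
\item \emph{Separation estimate.} Because the angles of $s^\alpha$ and $z$ stay uniformly apart from being opposite, I expect $|s^\alpha+z|\ge c_\theta(|s|^\alpha+|z|)$ on the rays, for all $z\in\Sigma(0,\theta)$. This is the main technical point, and I would verify it by an elementary angle argument.
\item \emph{Large $|z|$.} For $|z|\ge 2$ the same inequality holds on the arc, since $|s^\alpha|=1\le|z|/2$. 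Hence $|E^\gamma_{\alpha,\beta}(-z)|\le C|z|^{-\gamma}\int_{Ha}|e^s||s|^{\alpha\gamma-\beta}|ds|$, and the integral is finite because the contour avoids $0$.
\item \emph{Small $|z|$.} For $|z|\le 2$ I use that $E^\gamma_{\alpha,\beta}$ is entire, hence bounded on compacts.
\end{itemize}

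Together these give \eqref{mlf1}.

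\emph{Laplace formula \eqref{mlf2}.} Multiplying the series by $t^{\beta-1}$ gives the expansion
$$
t^{\beta-1}E^\gamma_{\alpha,\beta}(-\lambda t^\alpha)=\sum_n {\Gamma(\gamma+n)(-\lambda)^n\over \Gamma(\gamma) n!\Gamma(\alpha n+\beta)}\,t^{\alpha n+\beta-1}.
$$

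\begin{itemize}
\item \emph{Termwise transform for ${\rm Re}\, s>\lambda^{1/\alpha}$.} Here the absolute values of the terms have Laplace transforms summing to $\sum_n{\Gamma(\gamma+n)\over\Gamma(\gamma)n!}\lambda^n({\rm Re}\, s)^{-\alpha n-\beta}<\infty$. So Tonelli lets me transform termwise, with $\mathcal L t^{\alpha n+\beta-1}=\Gamma(\alpha n+\beta)s^{-\alpha n-\beta}$.
\item \emph{Resummation.} The binomial series then sums to $s^{-\beta}(1+\lambda s^{-\alpha})^{-\gamma}=s^{\alpha\gamma-\beta}(s^\alpha+\lambda)^{-\gamma}$.
\item \emph{Continuation to ${\rm Re}\, s>0$.} By \eqref{mlf1} with $\theta$ small, the function is $O(t^{\beta-1})$ near $0$ and bounded for large $t$. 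So its Laplace transform is analytic in ${\rm Re}\, s>0$. The right-hand side is analytic there too, since $s^\alpha+\lambda\ne0$ for $|{\rm Arg}\, s|<\pi/2$. The identity extends by analytic continuation.
\end{itemize}

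\emph{Analyticity in (ii).} For $z\in\Sigma(t_0,\pi)$ and $\tau\in[0,t_0]$ we have $z-\tau\in\Sigma(0,\pi)$. Hence $(z-\tau)^q$ and $(z-\tau)^\alpha$ are analytic in $z$, and $E^\gamma_{\alpha,\beta}$ is entire.

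\begin{itemize}
\item \emph{Uniform bounds.} On a compact $K\subset\Sigma(t_0,\pi)$, the set $\{z-\tau: z\in K,\ \tau\in[0,t_0]\}$ is a compact subset of $\Sigma(0,\pi)$. It stays at positive distance from $(-\infty,0]$, so the kernel is bounded uniformly.
\item \emph{Conclusion.} The integrand is therefore dominated by $C_K|w(\tau)|\in L_1(0,t_0)$. Continuity of $\phi$ follows by dominated convergence, and Fubini plus Morera's theorem (or differentiation under the integral) yields analyticity of $\phi$ in $\Sigma(t_0,\pi)$.
\end{itemize}
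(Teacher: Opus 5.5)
\paragraph{Verdict.} Your proposal is correct. It takes a more self-contained route than the paper, whose proof consists of citations.

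\paragraph{How the two routes differ.} The paper obtains \eqref{mlf1} from the known expansion $E^\gamma_{\alpha,\beta}(-z)\sim z^{-\gamma}/\Gamma(\beta-\alpha\gamma)+O(|z|^{-\gamma-1})$ in $\Sigma(0,\theta)$, together with local boundedness of the entire function. It quotes \eqref{mlf2} from the literature. It gets (ii) from a lemma in the author's earlier work, once it notes that $z^qE^\gamma_{\alpha,\beta}(-\lambda z^\alpha)$ is analytic in $\Sigma(0,\pi)$.

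You instead do three things:
\begin{enumerate}
\item You derive the bound from the Hankel representation via the angle-separation inequality $|s^\alpha+z|\ge\sin(\delta/2)\,(|s|^\alpha+|z|)$, with $\delta=\pi-\alpha\omega-\theta>0$.
\item You prove \eqref{mlf2} by termwise transformation for ${\rm Re}\,s>\lambda^{1/\alpha}$, followed by analytic continuation.
\item You prove (ii) by a uniform bound on compacts plus Fubini and Morera, which is the standard argument behind such a lemma.
\end{enumerate}

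\paragraph{What each route buys.} The paper's route is shorter, and the cited expansion gives the leading term, which is more than the lemma needs. Your route gives exactly the needed bound without external input. It is also insensitive to the degenerate cases $\beta-\alpha\gamma\in\{0,-1,-2,\dots\}$, such as $E^1_{\alpha,\alpha}$ and $E^2_{\alpha,2\alpha}$ used in the Appendix. There the displayed leading coefficient vanishes and one must rely on the remainder term.

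Your contour in fact gives the bound in one stroke. The whole contour lies in $\{|{\rm Arg}\,s|\le\omega,\ |s|\ge1\}$, so the separation inequality also holds on the arc and gives $|s^\alpha+z|\ge\sin(\delta/2)(1+|z|)$ everywhere. The split at $|z|=2$ is therefore unnecessary.

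\paragraph{Details to tighten.} None of these is serious.
\begin{enumerate}
\item The series derivation of the Hankel representation is valid only on contours with $|s|^\alpha>|z|$. For $|z|\ge2$ you must deform from an arc of radius $R>|z|^{1/\alpha}$ to the unit arc. Cauchy's theorem applies because the integrand is analytic on the annular sector $\{1\le|s|\le R,\ |{\rm Arg}\,s|\le\omega\}$, by the same separation argument. There, $s^{-\alpha\gamma}(1+zs^{-\alpha})^{-\gamma}$ coincides with the principal branch of $(s^\alpha+z)^{-\gamma}$: their arguments differ by a continuous integer multiple of $2\pi$ that vanishes at $s=R$.
\item In the continuation step for \eqref{mlf2}, $t^{\beta-1}E^\gamma_{\alpha,\beta}(-\lambda t^\alpha)=O(t^{\beta-1-\alpha\gamma})$ at infinity is not bounded when $\beta>1+\alpha\gamma$. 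Polynomial growth is what you need, and it suffices.
\item Analyticity of the principal power $(s^\alpha+\lambda)^{-\gamma}$ in ${\rm Re}\,s>0$ requires $s^\alpha+\lambda\notin(-\infty,0]$, not just $s^\alpha+\lambda\ne0$. This holds because ${\rm Re}\,s^\alpha>0$ there.
\end{enumerate}
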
 

\noindent {\it Proof}. (i) For \eqref{mlf2} see \cite{Garra}. The estimate \eqref{mlf1} follows from the asymptotic relation 
\beqst\label{mlfasym}
E_{\alpha,\beta}^\gamma(-z)\sim {1\over \Gamma(\beta-\alpha\gamma)}z^{-\gamma}+O\left({1\over |z|^{\gamma+1}}\right)\;\;\mbox{as}\; |z|\to \infty,\; z\in \Sigma(0,\theta),
\eeqst
that holds if $\theta\in (0,{(2-\alpha)\pi\over 2})$ (see \cite{Garra})
and the fact that $E_{\alpha,\beta}^\gamma(z)$ is locally bounded as an entire function. 

(ii) 
Since $z^q E_{\alpha,\beta}^\gamma(-\lambda z^\alpha)$ is analytic in $\Sigma(0,\pi)$, the assertion follows from Lemma 5 of \cite{Janno2024}.
\hfill $\Box$

\medskip
\noindent{\it Proof of Theorem} \ref{dpthm1}. (i)  Since 
$f=g=0$, $\varphi=\psi=0$, we have $F_{kl}={\mathcal X}_{kl}=0$, $\varphi_{kl}=\psi_{kl}=0$, $l=1,\ldots,\mu_k$, $k\in\N$. 
From formulas \eqref{prop1} and \eqref{prop2} we obtain  $U_{kl}=V_{kl}=0$, $l=1,\ldots,\mu_k$, $k\in\N$, hence $u_{kl}=v_{kl}=0$, $l=1,\ldots,\mu_k$, $k\in\N$. 
This implies $u=v=0$. 

\smallskip (ii) Let $k\in\N$, $l=1,\ldots,\mu_k$ and define the following  functions:
\beq\label{ukvk}
u_{kl}(t)=u_{kl}^1(t)+u_{kl}^2(t),\;\;v_{kl}(t)=v_{kl}^1(t)+v_{kl}^2(t)
\eeq
for $t>0$, where 
\beq\label{ukvk1uus}
&&\hskip-5truemm \begin{split}
&u_{kl}^1(t)=\left[E_{\alpha,1}^1(-\breve\lambda_kt^\alpha)+\vartheta_k q_k(t)\right]\varphi_{kl}-a q_k(t)\psi_{kl},
\\
&v_{kl}^1(t)=-bq_k(t)\varphi_{kl}+\left[E_{\alpha,1}^1(-\breve\lambda_kt^\alpha)+\zeta_k q_k(t)\right]\psi_{kl},
\end{split}
\\[1ex]\label{ukvk2uus}
&&\hskip-5truemm \begin{split}
&u_{kl}^2(t)=\left[t^{\alpha-1}E_{\alpha,\alpha}^1(-\breve\lambda_kt^\alpha)+\vartheta_k w_k\right]*f_{kl}(t)-a w_k*\chi_{kl}(t),
\\
&v_{kl}^2(t)=-bw_k*f_{kl}(t)+\left[t^{\alpha-1}E_{\alpha,\alpha}^1(-\breve\lambda_kt^\alpha)+\zeta_k w_k\right]*\chi_{kl}(t),
\end{split}
\eeq
\beq\label{thetak}
\vartheta_k={\varkappa-\kappa\over 2}\lambda_k+{d-c\over 2}+{\breve\lambda_k-\hat\lambda_k\over 2},\;\; \zeta_k={\kappa-\varkappa\over 2}\lambda_k+{c-d\over 2}+{\breve\lambda_k-\hat\lambda_k\over 2},
\eeq
$*$ denotes the convolution, i.e.
 $\phi_1*\phi_2(t)=\int_0^t \phi_1(t-\tau)\phi_2(\tau)d\tau$, and the functions $q_k$, $w_k$ extended to a complex variable $z$ are defined as follows:
\beq\label{qk}
&&q_k(z)=\left\{\begin{array}{ll} 
{E_{\alpha,1}^1(-\hat\lambda_k z^\alpha)-E_{\alpha,1}^1(-\breve\lambda_k z^\alpha)\over \breve\lambda_k-\hat\lambda_k}\; &\mbox{if $\breve\lambda_k\ne \hat\lambda_k$},
\\[1ex]
z^\alpha E_{\alpha,\alpha+1}^2 (-\breve\lambda_k z^\alpha)\; &\mbox{if $\breve\lambda_k= \hat\lambda_k$},
\end{array}\right.
\eeq
\beq\label{wk}
&&w_k(z)=\left\{\begin{array}{ll} 
{z^{\alpha-1}E_{\alpha,\alpha}^1(-\hat\lambda_k z^\alpha)-z^{\alpha-1}
E_{\alpha,\alpha}^1(-\breve\lambda_k z^\alpha)\over \breve\lambda_k-\hat\lambda_k}\; &\mbox{if $\breve\lambda_k\ne \hat\lambda_k$},
\\[1ex]
z^{2\alpha-1} E_{\alpha,2\alpha}^2 (-\breve\lambda_k z^\alpha)\; &\mbox{if $\breve\lambda_k= \hat\lambda_k$}.
\end{array}\right.
\eeq
Since ${d\over dz}E^1_{\alpha,1}(z)=E_{\alpha,\alpha+1}^2(z)$, ${d\over dz}E^1_{\alpha,\alpha}(z)=E_{\alpha,2\alpha}^2(z)$ \cite{Garra}, we can express $q_k$ and
$w_k$ in another form:
\beq\label{qkwkshort}
q_k(z)= \int_{\hat\lambda_k}^{\breve\lambda_k}{z^\alpha E_{\alpha,\alpha+1}^2 (-\xi z^\alpha)\over \breve\lambda_k-\hat\lambda_k}d\xi,\;\;
w_k(z)=\int_{\hat\lambda_k}^{\breve\lambda_k}{z^{2\alpha-1} E_{\alpha,2\alpha}^2 (-\xi z^\alpha)\over \breve\lambda_k-\hat\lambda_k}d\xi.
\eeq

By means of \eqref{lamineq}, \eqref{mlf1}, \eqref{thetak}, \eqref{qkwkshort} and the relation $1\le {\lambda_k\over\lambda_1}$ we deduce the following estimates:
\beq\label{thm1r1an}
&&\hskip-1.2truecm \begin{split}
&|E_{\alpha,1}^1(-\breve\lambda_k z^\alpha)|\le {C_\theta\over 1+\lambda_k|z|^\alpha},\; z\in\Sigma(0,\theta),
\\
&|q_k(z)|,|\vartheta_k q_k(z)|,|\zeta_k q_k(z)|\le {C_\theta\lambda_k|z|^\alpha\over (1+\lambda_k|z|^\alpha)^2}
\le {C_\theta\over 1+\lambda_k|z|^\alpha},\;  z\in\Sigma(0,\theta),
\end{split}
\\[1ex]
\label{thm1r3an}
&&\hskip-1.2truecm \begin{split}
&|z^{\alpha-1}E_{\alpha,\alpha}^1(-\breve\lambda_k z^\alpha)|\le {C_\theta |z|^{\alpha-1}\over 1+\lambda_k|z|^\alpha},\;z\in\Sigma(0,\theta),
\\
&|w_k(z)|,|\vartheta_k w_k(z)|,|\zeta_k w_k(z)|\le {C_\theta\lambda_k |z|^{2\alpha-1}\over (1+\lambda_k|z|^\alpha)^2}
\le {C_\theta |z|^{\alpha-1}\over 1+\lambda_k|z|^\alpha},\; z\in\Sigma(0,\theta),
\end{split}
\eeq
for any $\theta\in (0,{(2-\alpha)\pi\over 2\alpha})$, where $C_\theta$ is a constant.

Let us estimate $u_{kl}$ and $v_{kl}$. By means of  \eqref{thm1r1an} from \eqref{ukvk1uus}
 we deduce
\beq\label{thm1vvv}
|u_{kl}^1(t)|+|v_{kl}^1(t)|\le {C_2\over 1+\lambda_kt^\alpha}(|\varphi_{kl}|+|\psi_{kl}|),\; t>0,
\eeq
with a constant $C_2$. Without loss of generality we may assume that $r\le 1$ in \eqref{phiass}. We obtain
\beq\nonumber
&&|u_{kl}^1(t)|+|v_{kl}^1(t)|\le C_2 t^{\alpha(r-1)}\lambda_k^{r-1}{(\lambda_kt^\alpha)^{1-r}\over 1+\lambda_kt^\alpha}(|\varphi_{kl}|+|\psi_{kl}|)
\\ \label{thm1r2}
&&\le C_2 t^{\alpha(r-1)}\lambda_k^{r-1}(|\varphi_{kl}|+|\psi_{kl}|),\; t>0.
\eeq
By means of  \eqref{thm1r3an} from \eqref{ukvk2uus} we have
\beq\label{thm1www}
|u_{kl}^2(t)|+|v_{kl}^2(t)|\le C_3\left({t^{\alpha-1}\over 1+\lambda_kt^\alpha}\right)*(|f_{kl}(t)|+|\chi_{kl}(t)|)\; t>0,
\eeq
with a constant $C_3$. Without loss of generality assume that $r_1\le 1$ in \eqref{fass2}. We obtain
\beq\nonumber
&&|u_{kl}^2(t)|+|v_{kl}^2(t)|\le C_3 \left(t^{\alpha r_1-1}\lambda_k^{r_1-1}{(\lambda_kt^\alpha)^{1-r_1}\over 1+\lambda_kt^\alpha}\right)*(|f_{kl}(t)|+|\chi_{kl}(t)|)
\\ \label{thm1r4}
&&\le C_3 \lambda_k^{r_1-1}t^{\alpha r_1-1}*(|f_{kl}(t)|+|\chi_{kl}(t)|),\; t>0.
\eeq

The estimates \eqref{thm1vvv} and \eqref{thm1www} imply the assertion \eqref{thm1k}. 
Next let us define $u=u^1+u^2$ and $v=v^1+v^2$, where
\beq\label{udef}
  u^j=\sum_{k=1}^\infty\sum_{l=1}^{\mu_k} u_{kl}^j{\rm z}_{kl},\; v^j=\sum_{k=1}^\infty\sum_{l=1}^{\mu_k}v_{kl}^j{\rm z}_{kl},\; j=1,2.
\eeq
By means of \eqref{thm1r2}   we obtain 
\beq\nonumber
&&\|u^1\|_{L_p((0,T);{\mathcal D}^{1})}=\Big\| \Big[\sum_{k=1}^\infty\sum_{l=1}^{\mu_k} \lambda_k^{2}|u_{kl}^1(t)|^2\Big]^{1\over 2} \Big\|_{L_p(0,T)}
\\ \nonumber
&&\le C_{2}\Big\|\Big[\sum_{k=1}^\infty\sum_{l=1}^{\mu_k} t^{2\alpha(r-1)}\lambda_k^{2r}(|\varphi_{kl}|+|\psi_{kl}|)^2\Big]^{1\over 2} \Big\|_{L_p(0,T)}
\\ \label{thm1q1}
&&\le C_{2} \|t^{\alpha(r-1)}\|_{L_p(0,T)}(\|\varphi\|_{{\mathcal D}^{r}}+\|\psi\|_{{\mathcal D}^{r}})<\infty,\, T>0.
\eeq
 Similarly we deduce the estimates 
\beq\label{thm1q2a}
&&\hskip-1.3truecm \|v^1\|_{L_p((0,T);{\mathcal D}^{1})}\le C_{2} \|t^{\alpha(r-1)}\|_{L_p(0,T)}(\|\varphi\|_{{\mathcal D}^{r}}\!+\!\|\psi\|_{{\mathcal D}^{r}})\!<\!\infty,\, T>0,
\eeq
and 
\beq
\nonumber
&&\hskip-1.3truecm
\|e^{-\sigma t}u^1\|_{L_1((0,\infty);{\mathcal D}^{1})}, \|e^{-\sigma t}v^1\|_{L_1((0,\infty);{\mathcal D}^{1})}
\\  \label{thm1q2b}
&&\hskip-1.3truecm \le C_{2}\|e^{-\sigma t}t^{\alpha(r-1)}\|_{L_1(0,\infty)}(\|\varphi\|_{{\mathcal D}^{r}}+\|\psi\|_{{\mathcal D}^{r}})<\infty,\, \sigma>0.
\eeq
Further, by means of \eqref{thm1r4}   we have
\beq\nonumber
&&\hskip-1truecm \|u^2\|_{L_\infty((0,T);{\mathcal D}^{1})}=\Big\|\Big[\sum_{k=1}^\infty \sum_{l=1}^{\mu_k}
\lambda_k^{2}|u_{kl}^2(t)|^2\Big]^{1\over 2} \Big\|_{L_\infty(0,T)}
\\ \nonumber
&&\hskip-1truecm\le C_3 \Big\|\Big[\sum_{k=1}^\infty\sum_{l=1}^{\mu_k} \lambda_k^{2r_1}\big[\big(t^{\alpha r_1-1}*|f_{kl}|(t)\big)^2+\big(t^{\alpha r_1-1}*|\chi_{kl}|(t)\big)^2\big]\Big]^{1\over 2} \Big\|_{L_\infty(0,T)}
\\ \label{thm1q3}
&&\hskip-1truecm\le C_3{t_0^{\alpha r_1}\over \alpha r_1} \Big[\sum_{k=1}^\infty \sum_{l=1}^{\mu_k}\lambda_k^{2r_1}\left(\|f_{kl}\|_{L_\infty(0,t_0)}^2+
\|\chi_{kl}\|_{L_\infty(0,t_0)}^2\right)\Big]^{1\over 2}<\infty,\, 
\\ \nonumber
&&\hskip-1truecm T>0.
\eeq
Similarly we show that 
\beq \nonumber
&&\hskip-1truecm\|v^2\|_{L_\infty((0,T);{\mathcal D}^{1})}\le  C_3{t_0^{\alpha r_1}\over \alpha r_1}
\\  \label{thm1q4}
&&\hskip-1truecm  \times\Big[\sum_{k=1}^\infty \sum_{l=1}^{\mu_k}\lambda_k^{2r_1}\left(\|f_{kl}\|_{L_\infty(0,t_0)}^2+
\|\chi_{kl}\|_{L_\infty(0,t_0)}^2\right)\Big]^{1\over 2}<\infty, T>0,
\eeq
and
\beq
\nonumber
&&\hskip-1.3truecm
\|e^{-\sigma t}u^2\|_{L_1((0,\infty);{\mathcal D}^{1})}, \|e^{-\sigma t}v^2\|_{L_1((0,\infty);{\mathcal D}^{1})}\le C_3{1\over\sigma} {t_0^{\alpha r_1}\over \alpha r_1}
\\  \label{thm1q4b}
&&\hskip-1.3truecm  \times\Big[\sum_{k=1}^\infty \sum_{l=1}^{\mu_k}\lambda_k^{2r_1}\left(\|f_{kl}\|_{L_\infty(0,t_0)}^2+
\|\chi_{kl}\|_{L_\infty(0,t_0)}^2\right)\Big]^{1\over 2}<\infty, \sigma>0.
\eeq

The relations \eqref{thm1q1} - \eqref{thm1q4b} imply 
$(u,v)\in (L_p((0,T);H_0^2(\Omega))^2$, $T>0$, and the assertion \eqref{uexp}.

Next we are going show that $(u,v)$ belongs to $({\mathcal W}_{\alpha,p})^2$ and solves \eqref{e1} - \eqref{e6}. 
Let us apply Laplace transform to equalities \eqref{ukvk} - \eqref{ukvk2uus}. Using \eqref{mlf2}, \eqref{qk}, \eqref{wk} and the convolution rule, we obtain
\beqst
\begin{array}{ll}
&U_{kl}(s)=\Big({1\over s^\alpha+\breve\lambda_k}+{\vartheta_k\over (s^\alpha+\hat\lambda_k)(s^\alpha+\breve\lambda_k)}\Big)(F_{kl}(s)+s^{\alpha-1}\varphi_{kl})
\\
&\qquad -
{a\over (s^\alpha+\hat\lambda_k)(s^\alpha+\breve\lambda_k)}({\mathcal X}_{kl}(s)+s^{\alpha-1}\psi_{kl}),
\\[2ex]
&V_{kl}(s)=-
{b\over (s^\alpha+\hat\lambda_k)(s^\alpha+\breve\lambda_k)}(F_{kl}(s)+s^{\alpha-1}\varphi_{kl})
\\
&\qquad +\Big({1\over s^\alpha+\breve\lambda_k}+{\zeta_k\over (s^\alpha+\hat\lambda_k)(s^\alpha+\breve\lambda_k)}\Big)({\mathcal X}_{kl}(s)+s^{\alpha-1}\psi_{kl})
\end{array}
\eeqst
for ${\rm Re}s>0$. Using in these equalities the relations \eqref{factor}, \eqref{lambdad} and \eqref{thetak} we obtain the system \eqref{prop1}, \eqref{prop2}
 for ${\rm Re}s>0$. Solving the latter one for
$F_{kl}(s)+s^{\alpha-1}\varphi_{kl}$ and ${\mathcal X}_{kl}(s)+s^{\alpha-1}\psi_{kl}$
and rearranging the terms in obtained expressions in a suitable way, we reach \eqref{pro5}, \eqref{pro6}. Applying  the inverse Laplace transform to \eqref{pro5}, \eqref{pro6} we reach the system 
\eqref{pro3}, \eqref{pro4} for $(u_{kl},v_{kl})$. This immediately implies the system \eqref{pro1}, \eqref{pro2} for $(u,v)$. Let $T>0$. Due to the proved relation 
$(u,v)\in (L_p((0,T);H_0^2(\Omega))^2$ and imposed assumptions on $f$, $\chi$, the arguments of the operator $I^\alpha$ on the right-hand sides of 
\eqref{pro1}, \eqref{pro2} belongs to $L_p((0,T);L_2(\Omega))$. Due to Lemma \ref{lemma31}, it holds $u-\varphi,v-\psi\in {_0H^\alpha_p}((0,T);L_2(\Omega))$ and 
the equations \eqref{pro1}, \eqref{pro2} imply \eqref{e1}, \eqref{e2}. Moreover, since $p\in ({1\over\alpha},\infty)$, Lemma \ref{lemma31} implies $u,v\in C([0,T];L_2(\Omega))$
and $(u(t,\cdot)-\varphi)|_{t=0}=(v(t,\cdot)-\psi)|_{t=0}=0$. The latter one implies \eqref{e5}, \eqref{e6}. Summing up, the proved properties of $u$ and $v$ yield 
$(u,v)\in ({\mathcal W}_{\alpha,p,T})^2$. Since $T>0$ is arbitrary, we have $(u,v)\in ({\mathcal W}_{\alpha,p})^2$.

To complete the proof of (ii), it remains to verify the analytic continuation assertion. 
Let us define $u_{kl}(z)=u_{kl}^1(z)+u_{kl}^2(z)$, $v_{kl}(z)=v_{kl}^1(z)+v_{kl}^2(z)$ for $z\in \Sigma (t_0,\pi)$, where
\beqst
&&\hskip-5truemm \begin{split}
&u_{kl}^1(z)=\left[E_{\alpha,1}^1(-\breve\lambda_kz^\alpha)+\vartheta_k q_k(z)\right]\varphi_{kl}-a q_k(z)\psi_{kl},
\\
&v_{kl}^1(z)=-bq_k(z)\varphi_{kl}+\left[E_{\alpha,1}^1(-\breve\lambda_kz^\alpha)+\zeta_k q_k(z)\right]\psi_{kl},
\end{split}
\\[1ex]
&&\hskip-5truemm \begin{split}
&u_{kl}^2(z)=\left[z^{\alpha-1}E_{\alpha,\alpha}^1(-\breve\lambda_kz^\alpha)+\vartheta_k w_k\right]\star f_{kl}(z)-a w_k\star\chi_{kl}(z),
\\
&v_{kl}^2(z)=-bw_k\star f_{kl}(z)+\left[z^{\alpha-1}E_{\alpha,\alpha}^1(-\breve\lambda_kz^\alpha)+\zeta_k w_k\right]\star \chi_{kl}(z)
\end{split}
\eeqst
and $\phi_1\star\phi_2(z)=\int_0^{t_0}\phi_1(z-\tau)\phi_2(\tau)d\tau$. Since $f_{kl}(\tau)=\chi_{kl}(\tau)=0$, $\tau>t_0$, the functions $u_{kl}(z)$, $v_{kl}(z)$ are the extensions of 
$u_{kl}(t)$, $v_{kl}(t)$, $t>t_0$. Observing that 
Prabhakar functions are entire, Lemma \ref{mlflem} and \eqref{qk}, \eqref{wk} we see that $u_{kl}^j(z)$, $v_{kl}^j(z)$, $j=1,2$, are analytic in 
$\Sigma (t_0,\pi)$.

Next let us define the functions
$$
{\rm u}_{K}(z)\!=\!\sum_{k=1}^K\sum_{l=1}^{\mu_k}(u_{kl}^1(z)+u_{kl}^2(z)){\mathrm z}_{kl},\, {\rm v}_{K}(z)\!=\!\sum_{k=1}^K\sum_{l=1}^{\mu_k}(v_{kl}^1(z)+v_{kl}^2(z)){\mathrm z}_{kl},\, K\in\N.
$$
These are analytic in $\Sigma (t_0,\pi)$ as functions with values in $H_0^2(\Omega)$.

 Let $\theta\in (0,{(2-\alpha)\pi\over 2\alpha})$ and $D$ be a compact subset of $\Sigma (t_0,\theta)$.
Then  $\inf\limits_{z\in D}|z|>0$. 
By means of \eqref{thm1r1an} we obtain
\beq
\label{thm1prlopp1}
\lambda_k (|u_{kl}^1(z)|+|v_{kl}^1(z)|)\le {C_D^1 \over |z|^\alpha}{\lambda_k |z|^\alpha\over 1+\lambda_k|z|^\alpha}(|\varphi_{kl}|+|\psi_{kl}|)
\le C_D^2 (|\varphi_{kl}|+|\psi_{kl}|)
\eeq
for $z\in D$ with some constants $C_D^1$ and $C_D^2$. Moreover, $z-\tau\in \Sigma (0,\theta)$ for $z\in D$, $0<\tau<t_0$ 
and $\inf\limits_{z\in D,0<\tau<t_0}|z-\tau|>0$. By means of \eqref{thm1r3an} we deduce
\beq\nonumber
&&\hskip-5truemm \lambda_k (|u_{kl}^2(z)|+|v_{kl}^2(z)|)\le C_D^3\int_0^{t_0} |z-\tau|^{-1}{\lambda_k|z-\tau|^\alpha\over 1 +\lambda_k|z-\tau|^\alpha}d\tau
\\ \label{thm1prlopp2}
&&\hskip-5truemm\times (\|f_{kl}\|_{L_\infty(0,t_0)}+\|\chi_{kl}\|_{L_\infty(0,t_0)})\le C_D^4 (\|f_{kl}\|_{L_\infty(0,t_0)}+\|\chi_{kl}\|_{L_\infty(0,t_0)})
\eeq
for $z\in D$ with constants $C_D^3$ and $C_D^4$. The estimates \eqref{thm1prlopp1}, \eqref{thm1prlopp2} imply that
\beqst
&&\|{\rm u}_{K_2}(z)-{\rm u}_{K_1}(z)\|_{H_0^2(\Omega)}, \|{\rm v}_{K_2}(z)-{\rm v}_{K_1}(z)\|_{H_0^2(\Omega)}
\\
&&\le C_D\sum_{k=K_1}^{K_2}\sum_{l=1}^{\mu_k}(|\varphi_{kl}|+|\psi_{kl}|+
\|f_{kl}\|_{L_\infty(0,t_0)}+\|\chi_{kl}\|_{L_\infty(0,t_0)})
\eeqst
for $K_1,K_2\in\N$, $K_2>K_1$, where $C_D$ is a constant. Due to the assumptions \eqref{phiass} and \eqref{fass2}, ${\rm u}_{K}(z)$ and ${\rm v}_{K}(z)$ are Cauchy
sequences  for $z\in D$. Therefore,  the series $u(z,\cdot)=\sum\limits_{k=1}^\infty\sum\limits_{l=1}^{\mu_k}(u_{kl}^1(z)+u_{kl}^2(z)){\mathrm z}_{kl}$ 
and
$v(z,\cdot)=\sum\limits_{k=1}^\infty\sum\limits_{l=1}^{\mu_k}(v_{kl}^1(z)+v_{kl}^2(z)){\mathrm z}_{kl}$ exist for $z\in D$. Moreover, these series are uniformly convergent in $D$. Since $D$  is an arbitrary compact subset of $\Sigma (t_0,\theta)$, $\theta\in (0,{(2-\alpha)\pi\over 2\alpha})$, the functions $u(z,\cdot)$ and $v(z,\cdot)$ are
analytic in $\Sigma(t_0,{(2-\alpha)\pi\over 2\alpha})$  with values in $H_0^2(\Omega)$.
  The restriction to $(t_0,\infty)$ of $(u(z,\cdot),v(z,\cdot))$  coincides with the 
 solution of \eqref{e1} - \eqref{e6}. 
\hfill $\Box$

\end{document}